\documentclass[british,english]{amsart}

\usepackage[OT1]{fontenc}
\usepackage[utf8]{inputenc}
\usepackage{babel}
\usepackage{amstext}
\usepackage{amsthm}
\usepackage{amssymb}
\usepackage{geometry}
\geometry{verbose,tmargin=2cm,bmargin=2cm,lmargin=2cm,rmargin=2cm}
\usepackage{setspace}
\setstretch{1.3}
\usepackage[pdfusetitle,
 bookmarks=true,bookmarksnumbered=false,bookmarksopen=false,
 breaklinks=false,pdfborder={0 0 1},backref=false,colorlinks=false]
 {hyperref}

\makeatletter
\numberwithin{equation}{section}
\numberwithin{figure}{section}
\theoremstyle{plain}
\newtheorem{thm}{\protect\theoremname}[section]
\theoremstyle{plain}
\newtheorem{prop}[thm]{\protect\propositionname}
\theoremstyle{remark}
\newtheorem{notation}[thm]{\protect\notationname}
\theoremstyle{remark}
\newtheorem{rem}[thm]{\protect\remarkname}
\theoremstyle{plain}
\newtheorem{lem}[thm]{\protect\lemmaname}

\usepackage{hyperref}
\hypersetup{colorlinks=true, allcolors=blue}
\usepackage{tikz-cd}
\usepackage[utf8]{inputenc}
\usepackage[T1]{fontenc}
\usepackage{lmodern}
\usepackage{microtype}
\usepackage{frenchmath}
\setlength{\jot}{10pt}
\renewcommand{\mathbb}[1]{\mathbf{#1}}



\numberwithin{equation}{subsection}

\address{Harish-Chandra Research Institute, A CI of Homi Bhabha National
Institute, Chhatnag Road, Jhusi, Prayagraj - 211019, India}
\email{arpan3141@gmail.com}
\thanks{The author is supported by Ph.D. scholarship from Harish-Chandra Research Institute, A CI of Homi Bhabha National
Institute}
\subjclass[2020]{Primary 22E50; Secondary 11F70.}

\makeatother

\addto\captionsbritish{\renewcommand{\lemmaname}{Lemma}}
\addto\captionsbritish{\renewcommand{\notationname}{Notation}}
\addto\captionsbritish{\renewcommand{\propositionname}{Proposition}}
\addto\captionsbritish{\renewcommand{\remarkname}{Remark}}
\addto\captionsbritish{\renewcommand{\theoremname}{Theorem}}
\addto\captionsenglish{\renewcommand{\lemmaname}{Lemma}}
\addto\captionsenglish{\renewcommand{\notationname}{Notation}}
\addto\captionsenglish{\renewcommand{\propositionname}{Proposition}}
\addto\captionsenglish{\renewcommand{\remarkname}{Remark}}
\addto\captionsenglish{\renewcommand{\theoremname}{Theorem}}
\providecommand{\lemmaname}{Lemma}
\providecommand{\notationname}{Notation}
\providecommand{\propositionname}{Proposition}
\providecommand{\remarkname}{Remark}
\providecommand{\theoremname}{Theorem}

\begin{document}
\selectlanguage{british}%
\global\long\def\ffpbar{\bar{\mathbb{F}}_{p}}%

\global\long\def\ffq{\mathbb{F}_{q}}%

\global\long\def\calO{\mathcal{O}_{F}}%

\global\long\def\frakp{\mathfrak{p}_{F}}%

\global\long\def\bfi{\mathbf{i}}%

\global\long\def\bfj{\mathbf{j}}%

\global\long\def\bfm{\mathbf{m}}%

\global\long\def\bfn{\mathbf{n}}%

\global\long\def\bfk{\mathbf{k}}%

\global\long\def\bfT{\mathbf{T}}%

\global\long\def\SL{\operatorname{SL}}%

\global\long\def\ind{\mathrm{ind}}%

\global\long\def\Sym{\operatorname{Sym}}%

\global\long\def\Hom{\operatorname{Hom}}%

\global\long\def\ker{\operatorname{Ker}}%

\global\long\def\img{\operatorname{Im}}%

\global\long\def\End{\operatorname{End}}%

\global\long\def\sig{\sigma_{\vec{r}}}%

\title{\selectlanguage{british}%
On freeness of compactly induced mod-$p$ representations of ${\rm SL}_{2}(F)$}
\author{\selectlanguage{british}%
Arpan Das}
\begin{abstract}
Let\foreignlanguage{english}{ $p$ be a prime, and $F$ a non-archimedean
local field with residue characteristic $p$ and ring of integers
$\mathcal{O}_{F}$. Set $G_{S}:={\rm SL}_{2}(F)$ and $K_{0}:={\rm SL}_{2}(\mathcal{O}_{F})$
. For a smooth irreducible $\ffpbar$-representation $\sigma$ of
$K_{0}$, we study the structure of the compact induction ${\rm ind}_{K_{0}}^{G_{S}}(\sigma)$
as a left module over the standard spherical Hecke algebra ${\rm End}_{G_{S}}\left({\rm ind}_{K_{0}}^{G_{S}}(\sigma)\right)$.
We prove that it is free and of infinite rank.}
\end{abstract}

\maketitle
\selectlanguage{english}%

\section{Introduction}

The study of smooth mod-$p$ representations of $p$-adic groups has
been a central theme in the mod-$p$ and $p$-adic Langlands programs.
Following the pioneering work of Barthel-Livné \cite{Barthel-1} on
${\rm GL}_{2}$, much effort has gone into understanding the structure
and classification of such representations, particularly the supersingular
ones. Among the four types -- characters, principal series, Steinberg,
and supersingular -- much is known about the first three, but supersingular
representations remain more mysterious in general. For ${\rm GL}_{2}(\mathbb{Q}_{p})$,
Breuil \cite{BreuilGL2Qp1} achieved a complete classification by
constructing explicit models, but for general local fields and for
groups of higher ranks, the picture remains incomplete and technically
subtle. However, the analogue of Barthel--Livné's classification
for the group ${\rm SL}_{2}(F)$ in the mod-$p$ setting was subsequently
established in \cite{Abdellatif-1,ChengSL2}. Also, Herzig extended
this classification for $p$-adic ${\rm GL}_{n}$ in his pioneering
work \cite{Herzig_GLn,Herzig_satake}. More recently, this classification
has been further generalized to connected reductive $p$-adic groups
through the far-reaching work of Abe-Henniart-Herzig-Vign\'eras (see
\cite{Abe_split,Abe-Henniart-Herzig-Vigneras}). 

In all of the above works the structure of the compactly induced representations
and their pro-$p$-Iwahori invariants plays a crucial role. For example,
if $G$ is split reductive $p$-adic group and $\sigma$ is a weight
of some hyperspecial maximal compact subgroup $K$ then ${\rm ind}_{K}^{G}(\sigma)$
is torsion free as a module over $\mathcal{H}(G,K,\sigma):={\rm End}_{G}({\rm ind}_{K}^{G}(\sigma))$
(see \cite[Corollary 6.5]{Herzig_GLn}). When $G$ has semisimple
rank $1$ and is split or quasi-split, say ${\rm GL}_{2},{\rm SL}_{2}$
or the unramified $U(2,1)$, finer structural results are known. For
instance, in the above three examples the explicit right action of
the pro-$p$-Iwahori Hecke algebra on the pro-$p$-Iwahori invariants
of the compactly induced representations can be computed (see \cite{Barthel-1}
for ${\rm GL}_{2}$, \cite{Das24} for ${\rm SL}_{2}$, and \cite{Xu_hecke_eigenvalues}
for $U(2,1)$). 

In the present paper we consider the $p$-adic group $\SL_{2}$ and
we show that the compactly induced representations as left modules
over their spherical Hecke algebras are free and of infinite rank,
thus refining \cite[Corollary 6.5]{Herzig_GLn} in this special case.
Our main theorem is as follows.
\begin{thm}[Theorem \ref{thm:freeness theorem}]
 --- The compactly induced representation $\ind_{K_{0}}^{G_{S}}(\sig)$
is a free module of infinite rank over the spherical Hecke algebra
$\mathcal{H}(G_{S},K_{0},\sig)$.
\end{thm}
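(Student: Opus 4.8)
The spherical Hecke algebra for $\SL_2$ is known to be a polynomial ring $\ffpbar[T]$ in one variable $T$ (the standard Hecke operator attached to the double coset of a uniformizer-diagonal element), so the statement amounts to showing that $M := \ind_{K_0}^{G_S}(\sig)$ is a free $\ffpbar[T]$-module of infinite rank. The natural strategy is to filter $M$ by support on the Bruhat--Tits tree: let $M_{\le n}$ be the subspace of functions supported on vertices at distance $\le n$ from the base vertex fixed by $K_0$. Since $\sig$ has a basis indexed by cosets and the tree is locally finite (residue field $\ffq$ finite), each $M_{\le n}$ is finite-dimensional, and $M = \bigcup_n M_{\le n}$.

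First I would pin down the action of $T$ on this filtration. The operator $T$ is given by a sum over neighbours, so it roughly raises support-distance by one: $T(M_{\le n}) \subseteq M_{\le n+1}$, and more importantly the ``leading term'' of $T$ identifies $M_{\le n}/M_{\le n-1}$ with a piece of $M_{\le n+1}/M_{\le n}$. Concretely, working in the style of \cite{Barthel-1} and \cite{Das24}, one picks an explicit $\ffpbar$-basis of $M$ adapted to the tree, computes $T$ on basis elements, and extracts from the ``distance $+1$'' part of the formula an injective linear map. The key algebraic input is that multiplication by $T$ is injective on $M$ (this is the torsion-freeness of \cite[Corollary 6.5]{Herzig_GLn} in the $\SL_2$ case, which I may assume) and that $M/TM$ is the cokernel one wants to compute.

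The heart of the argument is then: identify $M/TM$ explicitly and show it is infinite-dimensional, then lift an $\ffpbar$-basis of $M/TM$ to $M$ and prove the lifts form a free $\ffpbar[T]$-basis. Freeness follows from a standard graded/filtered argument: order the chosen lifts by the support-distance of the basis vectors they come from; given a finite $\ffpbar[T]$-linear relation, look at the terms of maximal support-distance and use the leading-term description of $T$ together with injectivity to force all coefficients to vanish, proceeding by descending induction. That the rank is infinite is then immediate once $\dim_{\ffpbar} M/TM = \infty$, which should drop out of the explicit description of the cokernel (each ``new'' sphere of the tree at even/odd distance contributes fresh basis vectors not in the image of $T$).

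The main obstacle is the explicit computation of $M/TM$ and, equivalently, controlling the ``lower-order'' part of $T$ — the terms that do not strictly raise the support-distance but mix within a sphere. For $\SL_2$ (unlike $\GL_2$, where there is only one relevant case up to twist) the weight $\sig = \Sym^{\vec r}$ may be reducible or otherwise special for certain $\vec r$, and the combinatorics of how $T$ acts on the internal structure of each sphere (via the $\ffq$-points of the projective line) must be handled uniformly in $\vec r$. I expect this to require a careful choice of basis so that the matrix of $T$ is ``lower triangular plus a shift'' with invertible shift, after which the freeness and infinite-rank claims follow formally.
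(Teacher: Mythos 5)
Your overall strategy is the same as the paper's (filter $\ind_{K_{0}}^{G_{S}}(\sig)$ by spheres of the Bruhat--Tits tree, show the ``outward'' part of $\tau$ is injective, and build a basis of each truncation inductively so that a chosen set of vectors becomes a free $\ffpbar[\tau]$-basis), but as written the proposal has two genuine gaps, and they sit exactly where the paper has to do real work. First, your descending-induction argument for both generation and linear independence needs strictly more than $\tau(M_{\le n})\subseteq M_{\le n+1}$ together with global injectivity of $\tau$ (Herzig's torsion-freeness). What is actually required is the filtration-compatibility statement: if $f$ has top component on the $n$-th sphere, then $\tau(f)$ has a \emph{nonzero} component on the $(n+1)$-st sphere; equivalently, $f\in B_{n+1}$ and $\tau(f)\in B_{n+1}$ force $f\in B_{n}$. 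Without this, an element of top degree $n$ could be sent by $\tau$ into degree $\le n$ without vanishing, which destroys the leading-term analysis; it also destroys your generation argument, since writing $x=\sum_i c_i e_i+\tau y$ with the $e_i$ lifts of a basis of $M/\tau M$ gives no control on the filtration degree of $y$, so the induction does not close. In the paper this is condition (C4), and its proof is not formal: one uses that each sphere is $I_{S}(1)$-stable, a pro-$p$ fixed-vector argument to reduce to the one-dimensional spaces of $I_{S}(1)$-invariants $f_n$ on each sphere, and the explicit computation $\tau(f_n)=c_nf_n+f_{n+\delta(n)}$ with outward coefficient equal to $1$ (Lemmas \ref{lem:tau(f_n)} and \ref{lem: condition 3-top component always increases}).

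Second, the injectivity of the outward part of $\tau$ on the innermost piece $C_{0,\sig}$ (functions supported at the base vertex) is itself a nontrivial statement depending on the weight $\Sym^{\vec r}\ffpbar^{2}$, proved in the paper (Lemma \ref{lem:tau injective on C_0}) via the operator $U_{\vec r}$ and a nonvanishing argument for a polynomial of degree $r<q$ over $k_F$; you cannot get it from torsion-freeness, and your hoped-for normal form ``lower triangular plus a shift with invertible shift'' cannot hold as stated, because the shift maps a sphere of dimension $\dim C_{n,\sig}$ into one of strictly larger dimension $\dim C_{n+1,\sig}$, so it is at best injective, never invertible (this dimension growth is also precisely what yields infinite rank, the paper's condition (C1)). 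So the skeleton of your plan is sound and parallel to the paper, but the two key lemmas you defer --- strict increase of the top degree under $\tau$, and injectivity of $\tau$ on $C_{0,\sig}$ uniformly in $\vec r$ --- are the substance of the proof and are not supplied or correctly anticipated by the proposal.
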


Our result complements the analogous freeness results previously established
for ${\rm GL}_{2}$ (see \cite[Section 5]{Barthel-1}) and for the
unramified unitary group $U(2,1)$ (see \cite{Xu_freeness}). There
are, however, two notable distinctions. First, we isolate and formalize
the key combinatorial--linear algebraic argument that underlies both
proofs in \cite{Barthel-1,Xu_freeness}; this is stated abstractly
as Lemma \ref{lem: technical lemma}. This conceptual clarification
separates the structural core of the argument from the group-specific
computations, making the mechanism of freeness more transparent and
potentially applicable to other $p$-adic groups. Second, we give
an explicit proof of the non-vanishing of the standard Hecke operator
on the subspace of functions supported on the central vertex of the
Bruhat--Tits tree of ${\rm SL}_{2}$; this is Lemma \ref{lem:tau injective on C_0}.
Although this fact is expected on general grounds, its verification
requires a careful analysis of the Hecke action and explicit realization
of mod $p$ irreducibles of the finite group $\SL_{2}(\ffq)$ for
a $p$-power $q$. This, to our knowledge, has not appeared explicitly
in the literature. It was used implicitly, for example, in the proof
of the main theorem of \cite{Xu_freeness}. 

Finally, we note that Lemma \ref{lem:tau(f_n)}, together with \cite[Propositions 4.9 and 4.10]{Das24},
provides a detailed description of the explicit Hecke actions on the
pro-$p$-Iwahori invariants of compactly induced representations of
$\SL_{2}$ in the mod $p$ setting. The significance of this analysis
is underscored both by the main theorem of the present paper and by
the existence of Hecke eigenvalues for $\SL_{2}$ (see \cite[Proposition 4.14]{Das24}),
analogous to the ${\rm GL}_{2}$ case \cite[Proposition 32]{Barthel-1};
see also \cite[Question 8]{abe2017questionsmodprepresentations}. 

\subsection*{Outline of the proof of the main theorem}

-- The proof proceeds in two stages. 
\begin{enumerate}
\item In section \ref{subsec:A-technical-result} we prove a general combinatorial--linear
algebra assertion : if a vector space $V$ over any field admits a
graded decomposition $V=\bigoplus_{k\geq0}C_{k}$ and there is a linear
operator $T$ satisfying certain \emph{injectivity}, \emph{triangularity},
and \emph{filtration compatibity }conditions, then we can choose subsets
$A_{k}\subset C_{k}$ inductively so that the family $\{T^{i}(A_{j})\,|\,0\leq i,j\leq n,\,\,i+j\leq n\}$
of sets are mutually disjoint and their union forms a basis of the
truncated sum $B_{n}:=\bigoplus_{k\leq n}C_{k}$. \\
This yields an explicit combinatorial construction of a free $T$-module
basis $\bigsqcup_{k\geq0}A_{k}$ of $V$.
\item In section \ref{sec: main section} we apply this framework to $V={\rm ind}_{K_{0}}^{G_{S}}(\sigma_{\vec{r}})$
and $T=\tau$ . The relevant subspaces $C_{k}$ are obtained from
the Cartan-Iwahori decomposition 
\[
G_{S}=\bigsqcup_{n\in\mathbb{Z}}K_{0}\alpha_{0}^{n}I_{S}(1).
\]
See the next section for precise meaning of the notations. Then we
use the explicit formula for the action of $\tau$ on a standard function
(eq. \ref{eq:action of tau on standard function}) to find the possible
support points of the image of an element of $C_{k}$ in the tree
of $\SL_{2}$. This yields the inclusions
\[
\tau(C_{0})\subset C_{1}\qquad\text{and}\qquad\tau(C_{k})\subset C_{k-1}\oplus C_{k}\oplus C_{k+1}\quad\text{for }k\geq1.
\]
The injectivity of $\tau:C_{0}\to C_{1}$ is proved in the technical
Lemma \ref{lem:tau injective on C_0}. Finally, we compute the explicit
action of $\tau$ on the pro-$p$-Iwahori invariants and use this
computation to establish the \emph{filtration compatibility} condition
of the abstract setup. Thus we have the desired freeness.
\end{enumerate}

\section{Preliminaries}\label{sec:Preliminaries}

\subsection{General notions }

-- We take $p$ to be a prime throughout, and $\bar{\mathbb{F}}_{p}$
a fixed algebraic closure of the finite field $\mathbb{F}_{p}$ with
$p$ elements. All representations, unless otherwise mentioned, are
considered over $\bar{\mathbb{F}}_{p}$. We recall some generalities
on the abstract representation theory of locally profinite groups.
We let $G$ be any locally profinite group, and $H$ some closed subgroup.
A representation $\pi$ of $G$ is called \textit{smooth} if every
vector $v\in\pi$ is fixed by some compact open subgroup of $G$.
Let $\sigma$ be a smooth representation of $H$. We consider the
following space of functions : 
\[
\text{IND}_{H}^{G}(\sigma):=\{f:G\to\sigma\,|\,f(hg)=\sigma(h)(f(g)),\,\forall g\in G,h\in H\}.
\]
Then, $G$ acts on $\text{IND}_{H}^{G}(\sigma)$ via $(g\cdot f)(g^{\prime}):=f(g^{\prime}g)$.
The \emph{smooth part} of $\text{IND}_{H}^{G}(\sigma)$, that is,
vectors that have open stabilizers, is denoted by $\text{Ind}_{H}^{G}(\sigma)$,
and this subrepresentation is called the \textit{smooth induction}
of $\sigma$. The subrepresentation of $\text{Ind}_{H}^{G}(\sigma)$
consisting of functions $f$ such that the image of its support $\text{Supp}(f)$
inside $H\backslash G$ is compact (equivalently, finite, whenever
$H$ is also open) is denoted by $\text{c-Ind}_{H}^{G}(\sigma)$ or
$\text{ind}_{H}^{G}(\sigma)$, and is called the \textit{compact induction}
of $\sigma$.

In practice, whenever we use compact induction the subgroup $H$ is
typically considered to be open as well. So, for the remaining part
of this subsection we take $H$ to be an open subgroup of $G$. Then,
by virtue of the $H$-linearity, the support of any $f\in\text{ind}_{H}^{G}(\sigma)$
can be written as a finite disjoint union of right $H$-cosets. We
define some standard functions in $\text{ind}_{H}^{G}(\sigma)$. For
$g\in G$ and $v\in\sigma$ we define : 
\[
[g,v](x):=\begin{cases}
\sigma(xg)(v) & \text{if}\,x\in Hg^{-1}\\
0 & \text{otherwise}
\end{cases}.
\]
It can be checked that $g\cdot[g^{\prime},v]=[gg^{\prime},v]$ and
$[gh,v]=[g,\sigma(h)(v)]$ for every $g,g^{\prime}\in G$ and $h\in H$.
Also, any $f\in\text{ind}_{H}^{G}(\sigma)$ can be written as 
\[
f=\sum\limits_{Hg\in\text{Supp(\ensuremath{f})}}[g^{-1},f(g)].
\]

\subsection{Some standard notations related to $\protect\SL_{2}(F)$}

-- Let $F$ be a non-archimedean local field of residue characteristic
$p$. We denote its valuation ring by $\mathcal{O}_{F}$ and its valuation
ideal by $\mathfrak{p}_{F}$ and we fix a \emph{uniformizer }$\varpi_{F}$
that generates $\mathfrak{p}_{F}$. The residue field $k_{F}:=\mathcal{O}_{F}/\mathfrak{p}_{F}$
is a finite field of cardinality $q$ which is a power of $p$. We
set $G_{S}:=\SL_{2}(F)$ throughout this article. It is known that
$G_{S}$ has two maximal compact open subgroups $K_{0}:=\SL_{2}(\mathcal{O}_{F})$
and $K_{1}=\alpha K_{0}\alpha^{-1}$ where $\alpha:={\rm diag}(1,\varpi_{F})\in{\rm GL}_{2}(F)$.
Let $I_{S}(1)$ denote the \emph{pro-$p$-Iwahori subgroup} of $G_{S}$.
We have 
\[
I_{S}(1):=\begin{pmatrix}1+\mathfrak{p}_{F} & \mathcal{O}_{F}\\
\mathfrak{p}_{F} & 1+\mathfrak{p}_{F}
\end{pmatrix}\cap K_{0}.
\]
We finally let $U_{S}(\mathfrak{p}_{F}^{n})\text{ (resp.}\,\bar{U}(\mathfrak{p}_{F}^{n}))$
denote the upper triangular (resp. lower triangular) matrices in $G_{S}$
with top right (resp. bottom left) entry in the fractional ideal $\mathfrak{p}_{F}^{n}$
for $n\in\mathbb{Z}$.

\subsection{Generators of the weights of ${\rm SL}_{2}(\mathcal{O}_{F})$}

-- Let $k_{F}=\ffq$ and $q=p^{n}$. Then it is well known that the
irreducible $\bar{\mathbb{F}}_{p}$-representations of ${\rm SL_{2}}(\ffq)$
(or equivalently, smooth irreducible $\bar{\mathbb{F}}_{p}$-representations
of ${\rm SL}_{2}(\calO)$) are precisely of the form 
\[
{\rm Sym}^{\vec{r}}\bar{\mathbb{F}}_{p}^{2}:={\rm Sym}^{r_{0}}\bar{\mathbb{F}}_{p}^{2}\otimes\cdots\otimes{\rm Sym}^{r_{n-1}}\bar{\mathbb{F}}_{p}^{2},
\]
where $\vec{r}=(r_{0},\dots,r_{n-1})\in\{0,\dots,p-1\}^{n}$ and ${\rm SL}_{2}(\mathbb{F}_{q})$
acts on ${\rm Sym}^{r_{i}}\bar{\mathbb{F}}_{p}^{2}:=\bigoplus_{l=0}^{r_{i}}\ffpbar X^{r_{i}-l}Y^{l}$
via : 
\[
\begin{pmatrix}\begin{array}{cc}
a & b\\
c & d
\end{array}\end{pmatrix}\cdot(X^{r_{i}-l}Y^{l}):=(a^{p^{i}}X+c^{p^{i}}Y)^{r_{i}-l}(b^{p^{i}}X+c^{p^{i}}Y)^{l}.
\]
Consider the element $X^{\vec{r}}:=X^{r_{0}}\otimes X^{r_{1}}\otimes\cdots\otimes X^{r_{n-1}}\in{\rm Sym}^{\vec{r}}\bar{\mathbb{F}}_{p}^{2}$.
The following result is well known. We will however give a proof to
keep things self-contained, and also because some arguments in the
proof will be used later.
\begin{prop}
\label{prop:spanning set for weights} --- The representation ${\rm Sym}^{\vec{r}}\bar{\mathbb{F}}_{p}^{2}$
is generated by $X^{\vec{r}}$ as an $\bar{\mathbb{F}}_{p}[\bar{U}]$-module,
and the line $\bar{\mathbb{F}}_{p}\cdot X^{\vec{r}}$ is the unique
$U$-invariant line in ${\rm Sym}^{\vec{r}}\bar{\mathbb{F}}_{p}^{2}$.
Here, $U$ (resp. $\bar{U}$) denotes the upper (resp. lower) unipotent
subgroup of ${\rm SL}_{2}(\mathbb{F}_{q})$.

Consequently, the set $\{\bar{u}\cdot X^{\vec{r}}\,|\,\bar{u}\in\bar{U}_{S}(\calO)\}$
spans ${\rm Sym}^{\vec{r}}\bar{\mathbb{F}}_{p}^{2}$, and the line
$\ffpbar\cdot X^{\vec{r}}$ is the unique $I_{S}(1)$-invariant line
in $\sigma_{\vec{r}}$. 
\end{prop}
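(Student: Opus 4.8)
The plan is to prove the two claims in turn, treating the $\Sym^{\vec r}$-statement over $\ffq$ as the heart of the matter and then bootstrapping to $\sigma_{\vec r}$ over $\calO$. First I would reduce to a single tensor factor. Write $v_{\vec r}:=X^{\vec r}$ and note that the $\bar U$-action on $\Sym^{r_i}\ffpbar^2$ through the $i$-th Frobenius twist is a polynomial action: $\begin{pmatrix}1&0\\ c&1\end{pmatrix}$ sends $X^{r_i}$ to $(X+c^{p^i}Y)^{r_i}=\sum_{l=0}^{r_i}\binom{r_i}{l}c^{lp^i}X^{r_i-l}Y^l$. For a single factor the span of $\{\bar u\cdot X^{r_i}\}$ is therefore the span of the vectors $\big(\sum_l \binom{r_i}{l}c^{lp^i}X^{r_i-l}Y^l\big)_{c\in\ffq}$; since $\binom{r_i}{l}\neq 0$ in $\ffpbar$ for $0\le l\le r_i\le p-1$ (Lucas / the fact that $r_i<p$), this is a Vandermonde-type system in the variables $c^{p^i}$, and as $c$ ranges over $\ffq$ one gets at least $r_i+1$ distinct values of $c^{p^i}$ (the map $c\mapsto c^{p^i}$ is a bijection of $\ffq$), so the $(r_i+1)\times(r_i+1)$ coefficient matrix is invertible and the span is all of $\Sym^{r_i}\ffpbar^2$. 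For the tensor product I would then argue that $\Sym^{\vec r}\ffpbar^2$ is generated over $\ffpbar[\bar U]$ by $X^{\vec r}$: using that $\bar U$ maps diagonally into $\prod \bar U$ but the twists $c\mapsto c^{p^i}$ are algebraically independent enough that, combining several elements of $\bar U$, one produces all pure tensors $X^{r_0-l_0}Y^{l_0}\otimes\cdots\otimes X^{r_{n-1}-l_{n-1}}Y^{l_{n-1}}$; concretely, one shows the $\ffpbar[\bar U]$-submodule generated by $X^{\vec r}$ is $\bar U$-stable and contains, after taking suitable $\ffpbar$-linear combinations of $\bar u\cdot X^{\vec r}$ for various $\bar u$, each basis monomial. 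This is where I expect the main technical friction: isolating individual tensor-components requires either an inductive "lowering operator" argument on total degree $\sum l_i$, or a clean Vandermonde argument over $\ffq^n$ using that $(c, c^p, \ldots, c^{p^{n-1}})$ for $c\in\ffq$ together with products realizes enough points — this combinatorial bookkeeping is the step to be careful about.

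Next I would establish uniqueness of the $U$-invariant line. A vector $v\in\Sym^{\vec r}\ffpbar^2$ fixed by all of $U$ is, factor by factor after decomposing in the monomial basis, annihilated by the raising operator $e=\begin{pmatrix}0&1\\0&0\end{pmatrix}$ acting on each tensor slot through the $i$-th twist (here one uses that for $r_i<p$ the $\mathfrak{sl}_2$-triple acts on $\Sym^{r_i}$ as the $(r_i+1)$-dimensional irreducible, so $\ker e$ restricted to a single factor is exactly $\ffpbar X^{r_i}$); a standard argument with the Leibniz rule on the tensor product then forces $v\in\ffpbar\,(X^{r_0}\otimes\cdots\otimes X^{r_{n-1}})=\ffpbar\cdot X^{\vec r}$. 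Equivalently, and perhaps more cleanly for mod-$p$ purposes, one observes that $U$ is a $p$-group, so $(\Sym^{\vec r}\ffpbar^2)^U\neq 0$ and by counting — the Jordan–Hölder / Brauer-character analysis of $\Sym^{r_i}$ as a $U$-module shows each factor has a one-dimensional $U$-fixed space — the tensor product has a one-dimensional $U$-fixed space; since $\ffpbar\cdot X^{\vec r}$ is visibly $U$-fixed, it must be all of it.

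Finally the "consequently" clause: $\bar U_S(\calO)$ surjects onto $\bar U(\ffq)$ under reduction mod $\frakp$, and the $K_0$-action on $\sigma_{\vec r}=\Sym^{\vec r}\ffpbar^2$ factors through $\SL_2(\ffq)$, so $\{\bar u\cdot X^{\vec r}\mid \bar u\in\bar U_S(\calO)\}$ has the same $\ffpbar$-span as $\{\bar u\cdot X^{\vec r}\mid \bar u\in\bar U(\ffq)\}$, which is all of $\Sym^{\vec r}\ffpbar^2$ by the first part. For the $I_S(1)$-invariant line, I would use the pro-$p$-Iwahori factorization: the image of $I_S(1)$ in $\SL_2(\ffq)$ under reduction is exactly the lower-triangular unipotent subgroup $\bar U(\ffq)$ (since $I_S(1)$ has trivial upper-left and lower-right reductions by $1+\frakp$, and nontrivial reduction only in the strictly-lower-left entry). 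Wait — one must check which unipotent: by the normalization in the excerpt $I_S(1)$ reduces to $\bar U(\ffq)$, so $(\sigma_{\vec r})^{I_S(1)}=(\Sym^{\vec r}\ffpbar^2)^{\bar U(\ffq)}$, and uniqueness of the $\bar U$-invariant line follows by applying the Weyl-group twist (or the symmetric argument with $X\leftrightarrow Y$, $r_i\mapsto r_i$) to the statement already proved for $U$; the unique such line is then $\ffpbar\cdot(Y^{r_0}\otimes\cdots\otimes Y^{r_{n-1}})$ — but since the proposition as stated asserts it is $\ffpbar\cdot X^{\vec r}$, the intended reduction of $I_S(1)$ must land in the \emph{upper} unipotent under their conventions, and with that reading the $I_S(1)$-invariant line statement is literally the $U$-invariant line statement transported along reduction mod $\frakp$. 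I would simply match conventions so that $I_S(1) \bmod \frakp = U(\ffq)$ and conclude directly. The only subtlety to flag is this compatibility of the pro-$p$-Iwahori with the chosen upper/lower unipotent, which is purely notational once fixed.
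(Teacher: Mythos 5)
Your proposal has a genuine gap at the heart of the first claim: the generation of the full tensor product by $X^{\vec r}$ under the \emph{diagonal} (Frobenius-twisted) action of $\bar U$. Your factor-by-factor Vandermonde argument only shows each $\Sym^{r_i}\ffpbar^2$ is spanned by its own $\bar U$-orbit of $X^{r_i}$, and you explicitly leave the assembly step ("isolating individual tensor-components") as the part "to be careful about" without carrying it out. That step is not a routine bookkeeping afterthought; it is the content of the statement. The clean way to do it — and essentially what the paper does by embedding $\Sym^{\vec r}\ffpbar^2$ into $\Sym^{r}\ffpbar^2$ with $r=\sum_i r_i p^i<q$ and invoking Lucas' theorem — is to note that the coefficient of the basis monomial $\bigotimes_i X^{r_i-l_i}Y^{l_i}$ in $\bar u(c)\cdot X^{\vec r}$ is $\bigl(\prod_i\binom{r_i}{l_i}\bigr)c^{\sum_i l_i p^i}$, and the exponents $\sum_i l_i p^i$ are pairwise distinct and $<q$ as the tuples $(l_i)$ vary (they are $p$-adic expansions). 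Hence a single Vandermonde system in the $q$ values of $c\in\ffq$ already isolates every monomial; no argument "over $\ffq^n$" or inductive lowering-operator scheme is needed. Without some version of this observation your proof of the spanning statement is incomplete.

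The uniqueness of the $U$-invariant line has a related problem. Your alternative "counting" argument is not valid as stated: for a $p$-group acting diagonally on a tensor product, the fixed space need not be the tensor product of the fixed spaces (e.g.\ for $\mathbb{Z}/p$ and two $2$-dimensional Jordan blocks the fixed space of the product is $2$-dimensional), so "each factor has a one-dimensional $U$-fixed space, hence so does the product" is exactly the assertion to be proved and is only true here because of the Frobenius twists. Your raising-operator argument can be repaired, but not by "the Leibniz rule on the tensor product": since slot $i$ sees $c^{p^i}$, one must expand $u(c)\cdot v$ in powers of $c$ and use that the coefficient of $c^{p^i}$ is precisely the first-order raising operator in slot $i$ (the divided powers from earlier slots die because $r_j<p$), together with the fact that fixedness under the $q>r$ values of $c$ forces all coefficients to vanish. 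The paper instead argues directly that any $v\notin\ffpbar X^{\vec r}$ generates an $\ffpbar[U]$-module of dimension at least $2$, again inside the embedded model. Finally, a small but real misreading: from the displayed definition of $I_S(1)$ (upper-right entry in $\calO$, lower-left in $\frakp$) its reduction mod $\frakp$ is the \emph{upper} unipotent $U(\ffq)$; there is no convention to "match", and your initial claim that it reduces to $\bar U(\ffq)$ is simply wrong, though you land on the correct reading at the end.
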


\begin{proof}
Write $r=r_{0}+r_{1}p+\cdots+r_{n-1}p^{n-1}$; then $0\leq r<q=p^{n}$.
Consider $\Sym^{\vec{r}}\ffpbar^{2}$ as an $\SL_{2}(\ffq)$-subrepresentation
of $\Sym^{r}\ffpbar^{2}$ via the map $v_{0}\otimes v_{1}\otimes\cdots\otimes v_{n-1}\mapsto v_{0}v_{1}^{p}\cdots v_{n-1}^{p^{n-1}}$.
Then, a basis of $\Sym^{\vec{r}}\ffpbar^{2}$ is given by the family
of monomials $X^{\sum_{j=0}^{n-1}i_{j}p^{j}}Y^{r-(\sum_{j=0}^{n-1}i_{j}p^{j})},$
with $0\leq i_{j}\leq r_{j}$ for all $j\in\{0,\dots,n-1\}.$ 

Now, we recall that for $i=i_{0}+i_{1}p+\cdots+i_{n-1}p^{n-1}\leq r=r_{0}+r_{1}p+\cdots+r_{n-1}p^{n-1},$
the binomial coefficient $\binom{r}{i}$ in $\ffpbar$ is given by
\[
\binom{r}{i}=\begin{cases}
\binom{r_{0}}{i_{0}}\binom{r_{1}}{i_{1}}\cdots\binom{r_{n-1}}{i_{n-1}} & \,{\rm if}\,i_{l}\leq r_{l}\,{\rm for}\,{\rm all\,}l\\
0 & {\rm \,otherwise}
\end{cases}.
\]
This simply follows by writing $(X+1)^{r}\in\ffpbar[X]$ as $(X+1)^{r_{0}}(X^{p}+1)^{r_{1}}\cdots(X^{p^{n-1}}+1)^{r_{n-1}}$
and then expanding and comparing the binomial coefficients.

Consequently, the vector $X^{i}Y^{r-i}$ (for $0\leq i\leq r$) is
in the above basis if and only if $\binom{r}{i}\neq0.$ Therefore,
for any $\lambda\in\ffq$ we take $\bar{u}(\lambda)\in\bar{U}$, and
then we have 
\[
\bar{u}(\lambda)\cdot X^{r}=\sum_{i=0}^{r}\binom{r}{i}\lambda^{r-i}X^{i}Y^{r-i}\in\ffpbar[\bar{U}]\cdot X^{r},
\]
and the non-zero terms of the above sum correspond to those $i$ for
which $\binom{r}{i}\neq0$. Since $\lambda$ has $q$ many possible
values and $r<q,$ an elementary argument in linear algebra shows
that for each $i\leq r$ with $\binom{r}{i}\neq0$, we have $X^{i}Y^{r-i}\in\ffpbar[\bar{U}]\cdot X^{r},$
and hence $\Sym^{\vec{r}}\ffpbar^{2}=\ffpbar[\bar{U}]\cdot X^{\vec{r}}.$

We now turn to the second statement to show that $\bar{\mathbb{F}}_{p}\cdot X^{\vec{r}}$
is the unique $U$-invariant line in ${\rm Sym}^{\vec{r}}\bar{\mathbb{F}}_{p}^{2}$.
As before, we think of ${\rm Sym}^{\vec{r}}\bar{\mathbb{F}}_{p}^{2}$
inside ${\rm Sym}^{r}\bar{\mathbb{F}}_{p}^{2}$, and consider a vector
$v\in{\rm Sym}^{\vec{r}}\bar{\mathbb{F}}_{p}^{2}\setminus\{\ffpbar\cdot X^{r}\}.$
We can write $v=\sum_{\vec{i}\preceq\vec{r}}\mu_{\vec{i}}X^{\sum_{j=0}^{n-1}i_{j}p^{j}}Y^{r-(\sum_{j=0}^{n-1}i_{j}p^{j})},$
where for the tuples $\vec{i}=(i_{0},\dots,i_{n-1})$ and $\vec{r}=(r_{0},\dots,r_{n-1})$
the notation $\vec{i}\preceq\vec{r}$ means $i_{j}\leq r_{j}$ for
every $j\in\{0,\dots,n-1\},$ and $\mu_{\vec{i}}$ are scalars such
that $\mu_{\vec{i}}\neq0$ for some $\vec{i}.$ Then, for any $\lambda\in\ffq$,
we have for $u(\lambda)\in U$ the following 
\begin{align*}
u(\lambda)\cdot v= & \sum_{\vec{i\preceq\vec{r}}}\mu_{\vec{i}}X^{\sum_{j=0}^{n-1}i_{j}p^{j}}(\lambda X+Y)^{r-(\sum_{j=0}^{n-1}i_{j}p^{j})}\\
= & \sum_{\vec{i}\preceq\vec{r}}\sum_{\vec{i}+\vec{j}\preceq\vec{r}}\mu_{\vec{i}}\lambda^{j}\binom{r-i}{j}X^{i+j}Y^{r-(i+j)}\\
= & \sum_{\vec{i}\preceq\vec{r}}\lambda^{i}\underset{:=v_{\vec{i}}}{\underbrace{\sum_{\vec{i}+\vec{j}\preceq\vec{r}}\mu_{\vec{j}}\binom{r-j}{i}X^{i+j}Y^{r-(i+j)}}},
\end{align*}
where in the second equality the integers $i$ and $j$ are the $p$-ary
sums corresponding to the tuples $\vec{i}$ and $\vec{j}$ respectively,
and in the third equality we have interchanged the tuples $\vec{i}$
and $\vec{j}.$ Now, since $v\notin\ffpbar\cdot X^{r},$ there is
some tuple $\vec{j_{0}}\precneqq\vec{r}$ such that $\mu_{\vec{j_{0}}}\neq0.$
We take such a $\vec{j_{0}}$ with the corresponding $p$-ary sum
$j_{0}$ being least. Consider two distinct tuples $\vec{i_{1}},\,\vec{i_{2}}\preceq\vec{r}-\vec{j_{0}}$
so that the polynomials $v_{\vec{i_{1}}}$and $v_{\vec{i_{2}}}$ are
non-zero and hence linearly independent (by comparing the monomials
of least degree in $X$). As a result, the subspace $\ffpbar[U]\cdot v$
has dimension at least two. 
\end{proof}
\begin{notation}
-- We write $\sigma_{\vec{r}}$ for the representation ${\rm Sym}^{\vec{r}}\bar{\mathbb{F}}_{p}^{2}$,
and $v_{\sigma_{\vec{r}}}$ for the generating vector $X^{\vec{r}}$
throughout this article. When $0\leq r<p$ we will write $r$ instead
of $\vec{r}$.
\end{notation}

\begin{rem}
\label{rem:Y^r generates weight as U-module} -- Similarly one can
show that $Y^{\vec{r}}$ also generates $\sigma_{\vec{r}}$ as an
$\ffpbar[U]$-module, and the line $\ffpbar\cdot Y^{\vec{r}}$ is
the unique $\bar{U}$-invariant (equivalently $I_{S}(1)^{\mathsf{T}}$-invariant)
line in $\sigma_{\vec{r}}$. Hence, the set $\{u\cdot(w_{0}\cdot v_{\sigma_{\vec{r}}})\,|\,u\in U_{S}(\calO)\}$
spans $\sigma_{\vec{r}}$.
\end{rem}

\subsection{Spherical Hecke algebra}

-- We recall that for a weight $\sigma_{\vec{r}}$ of $K_{0}:=\SL_{2}(\calO)$,
the \emph{spherical Hecke algebra} $\mathcal{H}(G_{S},K_{0},\sig):=\End_{G_{S}}(\ind_{K_{0}}^{G_{S}}(\sigma_{\vec{r}}))$
is generated by a single operator $\tau$ as a polynomial algebra
in one variable i.e. $\End_{G_{S}}(\ind_{K_{0}}^{G_{S}}(\sigma_{\vec{r}}))=\ffpbar[\tau]$.
In fact, we can compute the explicit action of $\tau$ on a standard
function $[g,v]\in\ind_{K_{0}}^{G_{S}}(\sigma_{\vec{r}})$, as follows
(see \cite[Corollaire 3.12]{Abdellatif-1}) :
\begin{multline}
\text{\fbox{\ensuremath{\tau([g,v])=\sum_{\lambda\in k_{F}^{2}}\Bigg[g\begin{pmatrix}\begin{array}{cc}
1 & A(\lambda)\\
0 & 1
\end{array}\end{pmatrix}\alpha_{0}^{-1},w_{0}U_{\vec{r}}\sigma_{\vec{r}}\bigg(\begin{pmatrix}\begin{array}{cc}
0 & 1\\
-1 & A(\lambda)
\end{array}\end{pmatrix}\bigg)v\Bigg]+\sum_{\mu\in k_{F}}\Bigg[g\begin{pmatrix}\begin{array}{cc}
1 & 0\\
\varpi_{F}A(\mu) & 1
\end{array}\end{pmatrix}\alpha_{0},U_{\vec{r}}v\Bigg].}}}\label{eq:action of tau on standard function}
\end{multline}
Here, for $\lambda=(\lambda_{0},\lambda_{1},\dots,\lambda_{m-1})\in k_{F}^{m}$
the notation $A(\lambda):=\sum_{j=0}^{m-1}[\lambda_{j}]\varpi_{F}^{j}$,
and $\alpha_{0}:=\mathrm{diag}(\varpi_{F}^{-1},\varpi_{F}).$ The
operator $U_{\vec{r}}:=U_{r_{0}}\otimes\cdots\otimes U_{r_{n-1}}\in\End_{\ffpbar}(\sigma_{\vec{r}})$
is such that $U_{r_{j}}\in\End_{\ffpbar}(\sigma_{r_{j}})$ and is
defined as follows : 
\begin{equation}
U_{r_{j}}(X^{l}Y^{r_{j}-l})=\begin{cases}
Y^{r_{j}} & \text{if }\,l=0\\
0 & \text{if }\,l\neq0
\end{cases}.\label{eq:formula for U_r}
\end{equation}

Now the compact induction ${\rm ind}_{K_{0}}^{G_{S}}(\sigma_{\vec{r}})$
is a left $\mathcal{H}(G_{S},K_{0},\sig)$-module. The mod $p$ irreducibles
of $K_{1}$ are denoted by $\sig^{\alpha}$. These have $\sig$ as
the underlying representation space on which an element $k_{1}=\alpha k_{0}\alpha^{-1}\in K_{1}$
(where $k_{0}\in K_{0}$) acts by $k_{0}.$ Since canonically we have
$\mathcal{H}(G_{S},K_{0},\sig)\simeq\mathcal{H}(G_{S},K_{1},\sig^{\alpha})$
and ${\rm ind}_{K_{1}}^{G_{S}}(\sig^{\alpha})\simeq{\rm ind}_{K_{0}}^{G_{S}}(\sig)^{\alpha}$
(see \cite[Propositions 3.6 and 3.23]{Abdellatif-1}) it suffices
to consider only the representation ${\rm ind}_{K_{0}}^{G_{S}}(\sigma_{\vec{r}})$
as a left $\mathcal{H}(G_{S},K_{0},\sig)$-module. The $\mathcal{H}(G_{S},K_{1},\sig^{\alpha})$-module
structure of ${\rm ind}_{K_{1}}^{G_{S}}(\sig^{\alpha})$ is essentially
same.

\subsection{Pro-$p$-Iwahori invariants of ${\rm ind}_{K_{0}}^{G_{S}}(\sigma_{\vec{r}})$}

-- We now recall some facts about the invariants $\ind_{K_{0}}^{G_{S}}(\sigma_{\vec{r}})^{I_{S}(1)}$.
It is known that it has a basis consisting of functions $\{f_{n}\,|\,n\in\mathbb{Z}\}$
such that the function $f_{n}$ is supported on $K_{0}\alpha_{0}^{-n}I_{S}(1)$
and satisfies
\[
f_{n}(\alpha_{0}^{-n})=\begin{cases}
w_{0}\cdot v_{\sigma_{\vec{r}}} & \text{if }\,n>0\\
v_{\sigma_{\vec{r}}} & \text{if }\,n\leq0
\end{cases}.
\]
Now, as $I_{S}(1)=U_{S}(\calO)\times T_{S}(1+\frakp)\times\bar{U}_{S}(\frakp)$,
and $\alpha_{0}$ normalizes $T_{S}$, we have $K_{0}\alpha_{0}^{-n}I_{S}(1)=K_{0}\alpha_{0}^{-n}\bar{U}_{S}(\frakp)$
and $K_{0}\alpha_{0}^{n}I_{S}(1)=K_{0}\alpha_{0}^{n}U_{S}(\calO)$
for $n>0$. Therefore, we have the decompositions 
\[
K_{0}\alpha_{0}^{-n}I_{S}(1)=\bigcup_{\bar{u}\in\bar{U}_{S}(\frakp)/\bar{U}_{S}(\frakp^{2n})}K_{0}\alpha_{0}^{-n}\bar{u}
\]
 and 
\[
K_{0}\alpha_{0}^{n}I_{S}(1)=\bigcup_{u\in U_{S}(\calO)/U_{S}(\frakp^{2n})}K_{0}\alpha_{0}^{n}u
\]
 for $n>0$, and hence for $n\in\mathbb{Z}$ we can write these $f_{n}$
as follows : 
\begin{equation}
\fbox{\ensuremath{f_{n}=\begin{cases}
\sum_{\bar{u}\in\bar{U}_{S}(\frakp)/\bar{U}_{S}(\frakp^{2n})}[\bar{u}\alpha_{0}^{n},w_{0}\cdot v_{\sigma_{\vec{r}}}] & \text{if }\,n>0\\
\sum_{u\in U_{S}(\calO)/U_{S}(\frakp^{-2n})}[u\alpha_{0}^{n},v_{\sigma_{\vec{r}}}] & \text{if }\,n\leq0
\end{cases}.} }\label{eq:decomposition of f_n}
\end{equation}

\section{An abstract linear-algebraic criterion}\label{subsec:A-technical-result}

In this section we prove a purely linear-algebraic/combinatorial lemma
that forms the abstract framework for the results of the subsequent
section. This result gives certain conditions to test when a graded
vector space with a finite collection of commuting linear operators
is a free module over the algebra generated by the operators.

\begin{lem}
\label{lem: technical lemma} --- Let $d\geq1$ and $V$ be a vector
space over a field $k$ and let
\[
V=\bigoplus_{\mathbf{n}\in\mathbb{Z}_{\ge0}^{d}}C_{\mathbf{n}}
\]
be a direct sum decomposition by non-trivial subspaces indexed by
multi-indices $\mathbf{n}=(n_{1},\dots,n_{d})$; set $|\bfn|:=n_{1}+\cdots+n_{d}$.
Let $T_{1},\dots,T_{d}$ be linear operators on $V$ which commute
pairwise \emph{:} $T_{i}T_{j}=T_{j}T_{i}$ for all $i,j$. Assume
the following \emph{:}
\begin{enumerate}
\item[\textit{\emph{(H1)}}]  Each $C_{\bfn}$ is finite dimensional and ${\rm dim}\,C_{\mathbf{n}}>\sum_{\{\bfm\,|\,|\bfm|<|\bfn|\}}{\rm dim}\,C_{\bfm}$.
\item[\textit{\emph{(H2)}}]  For each coordinate $j$,
\[
T_{j}(C_{\mathbf{0}})\subseteq C_{\mathbf{e}_{j}},\qquad\text{and}\qquad T_{j}|_{C_{\mathbf{0}}}:C_{\mathbf{0}}\to C_{\mathbf{e}_{j}}\text{ is injective}
\]
 where $\mathbf{e}_{j}$ denotes the $d$-tuple having $1$ in the
$j$-th coordinate and zero elsewhere.
\item[\textit{\emph{(H3)}}]  For every multi-index $\mathbf{n}\neq\mathbf{0}$ and each $j\in\{1,\dots,d\}$,
\[
T_{j}(C_{\mathbf{n}})\subseteq C_{\mathbf{n}-\mathbf{e}_{j}}\oplus C_{\mathbf{n}}\oplus C_{\mathbf{n}+\mathbf{e}_{j}},
\]
where $C_{\mathbf{n}-\mathbf{e}_{j}}$ is taken to be $0$ if $n_{j}=0$.
\item[\textit{\emph{(H4)}}]  For each $N\geq0$, write
\[
B_{N}:=\bigoplus_{|\mathbf{n}|\le N}C_{\mathbf{n}}.
\]
Then, for each $j$ and each $N\geq0$ the following holds \emph{:}
if $f\in B_{N+1}$ and $T_{j}(f)\in B_{N+1}$, then $f\in B_{N}$.
\item[\textit{\emph{(H5)}}]  For each $\bfn$ with $|\bfn|\geq2$, the collection of subspaces
\[
\{(\pi_{\bfn}\circ\bfT^{\bfn-\bfj})(C_{\bfj})\,|\,\bfj<\bfn\,\,\,\text{ coordinatewise }\}\qquad(\text{where }\pi_{\bfn}:V\to C_{\bfn}\text{ is the projection map})
\]
 are direct summands in $C_{\bfn}$. 
\end{enumerate}
Then there exists non-empty subsets $\ensuremath{A_{\mathbf{\bfn}}\subset C_{\mathbf{\bfn}}}$
for every $\bfn$ such that for any $N\geq0$ the collection 
\[
\{\mathbf{T}^{\mathbf{i}}(A_{\mathbf{j}})\,|\,|\mathbf{i}|+|\mathbf{j}|\le N\}\qquad(\text{where }\mathbf{T}^{\mathbf{i}}:=T_{1}^{i_{1}}\cdots T_{d}^{i_{d}})
\]
 is a mutually disjoint collection of sets and their union forms a
basis of $B_{N}$. Consequently, if $V$ is treated as a module over
the polynomial algebra $k[X_{1},\dots,X_{d}]$ where $X_{j}$ acts
by $T_{j}$, then the union $\bigsqcup_{\bfn}A_{\bfn}$ is a module
basis of $V$.
\end{lem}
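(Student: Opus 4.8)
The plan is to construct the sets $A_{\bfn}$ by induction on $N=|\bfn|$, maintaining at each stage the invariant that $\{\bfT^{\bfi}(A_{\bfj})\mid |\bfi|+|\bfj|\le N\}$ is a mutually disjoint family whose union is a basis of $B_N$. For the base case $N=0$, hypothesis (H1) with $\bfm$ ranging over the empty set gives $\dim C_{\mathbf 0}>0$, so we may simply take $A_{\mathbf 0}$ to be any basis of $C_{\mathbf 0}$; then $B_0=C_{\mathbf 0}$ and the claim is trivial. For the inductive step, assume the sets $A_{\bfm}$ have been chosen for all $|\bfm|\le N$ with the stated property. We must produce $A_{\bfn}$ for each $\bfn$ with $|\bfn|=N+1$ so that the enlarged family is disjoint and spans (and is independent in) $B_{N+1}$.

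First I would analyze the "old" part of the prospective basis of $B_{N+1}$, namely $\mathcal{F}_{N+1}:=\{\bfT^{\bfi}(A_{\bfj})\mid |\bfi|+|\bfj|\le N+1,\ |\bfj|\le N\}$. These are the vectors carried over from level $N$ plus the new vectors $\bfT^{\bfi}(A_{\bfj})$ with $|\bfi|+|\bfj|=N+1$ and $|\bfj|\le N$. Using (H2) and (H3), an induction on $|\bfi|$ shows $\bfT^{\bfi}(C_{\bfj})\subseteq\bigoplus_{|\bfm-\bfj|\le|\bfi|,\ \bfm\ge \text{appropriate}} C_{\bfm}$; more precisely, one checks that for $|\bfi|+|\bfj|\le N+1$ one has $\bfT^{\bfi}(C_{\bfj})\subseteq B_{N+1}$, so $\mathcal{F}_{N+1}$ indeed lives in $B_{N+1}$. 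The key linear-independence claim is that the union of $\mathcal{F}_{N+1}$ is still linearly independent in $B_{N+1}$. By the inductive hypothesis its subfamily with $|\bfi|+|\bfj|\le N$ is a basis of $B_N$, so it suffices to show that the images of the new vectors $\bfT^{\bfi}(A_{\bfj})$ ($|\bfi|+|\bfj|=N+1$) under the projection $V\twoheadrightarrow V/B_N=\bigoplus_{|\bfm|\ge N+1}C_{\bfm}$ are linearly independent, and in fact that their further projections to $\bigoplus_{|\bfm|=N+1}C_{\bfm}$ are. This is where (H4) does the work: it guarantees that $T_j$ cannot map anything genuinely supported in degree $\le N+1$ back into $B_{N+1}$ unless it was already in $B_N$, so repeated application lets us peel off leading terms; combined with (H2)'s injectivity on $C_{\mathbf 0}$ propagated up via commuting operators, one sees $\bfT^{\bfi}$ is injective on $A_{\bfj}$ and the leading ($\pi_{\bfm}$ with $|\bfm|=N+1$) components are controlled. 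Concretely, for fixed $\bfn$ with $|\bfn|=N+1$, the leading contributions landing in $C_{\bfn}$ come exactly from the subspaces $(\pi_{\bfn}\circ\bfT^{\bfn-\bfj})(C_{\bfj})$ for $\bfj<\bfn$, and hypothesis (H5) says these are independent direct summands of $C_{\bfn}$; so the new vectors have independent leading terms and $\mathcal{F}_{N+1}$ is independent.

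Next I would count dimensions to find room for $A_{\bfn}$. Fix $\bfn$ with $|\bfn|=N+1$. The span of the leading terms of $\mathcal{F}_{N+1}$ inside $C_{\bfn}$ is $W_{\bfn}:=\sum_{\bfj<\bfn}(\pi_{\bfn}\circ\bfT^{\bfn-\bfj})(C_{\bfj})$, and by (H5) this is a direct summand, with $\dim W_{\bfn}\le\sum_{\bfj<\bfn}\dim C_{\bfj}\le\sum_{|\bfm|<|\bfn|}\dim C_{\bfm}<\dim C_{\bfn}$ by (H1). Hence $C_{\bfn}/W_{\bfn}\ne 0$; choose a complement $C_{\bfn}=W_{\bfn}\oplus W_{\bfn}'$ with $W_{\bfn}'\ne 0$ and let $A_{\bfn}$ be any basis of $W_{\bfn}'$ (nonempty). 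Then $\mathcal{F}_{N+1}\cup A_{\bfn}$ (union over all such $\bfn$) has leading terms spanning all of $\bigoplus_{|\bfn|=N+1}C_{\bfn}$ modulo $B_N$: indeed the old leading terms span $\sum_{\bfn}W_{\bfn}$ and the $A_{\bfn}$ fill in the complements. A triangularity/leading-term argument (strictly upper-triangular change of basis with respect to the filtration $B_0\subset B_1\subset\cdots$) then upgrades "spans modulo $B_N$" to "spans $B_{N+1}$" together with "linearly independent", so $\{\bfT^{\bfi}(A_{\bfj})\mid |\bfi|+|\bfj|\le N+1\}$ is a disjoint family forming a basis of $B_{N+1}$. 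Taking the union over all $N$, since $V=\bigcup_N B_N$, the family $\{\bfT^{\bfi}(A_{\bfj})\}$ over all $\bfi,\bfj$ is a basis of $V$, which is exactly the statement that $\bigsqcup_{\bfn}A_{\bfn}$ is a free $k[X_1,\dots,X_d]$-module basis.

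The main obstacle I anticipate is making the "leading term" bookkeeping fully rigorous: one must simultaneously track (a) that each $\bfT^{\bfi}(A_{\bfj})$ has a well-defined nonzero leading component in degree $|\bfi|+|\bfj|$ (requiring injectivity of $\bfT^{\bfi}$ on $A_{\bfj}$, which comes from iterating the injectivity in (H2) along commuting $T_\ell$'s and using (H4) to rule out degree drop), (b) that the map sending a new vector to its leading component is compatible with the direct-sum decomposition of (H5) so that distinct $(\bfi,\bfj)$ and $(\bfi',\bfj')$ with the same total degree but landing in the same $C_{\bfn}$ contribute to genuinely complementary summands, and (c) that the disjointness of the sets $\bfT^{\bfi}(A_{\bfj})$ (not merely independence of their union) holds — this follows once independence is established since all the vectors involved are nonzero and distinct. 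Keeping the multi-index induction and the triangular-basis argument cleanly separated, rather than entangled, is the delicate part; everything else is the routine linear algebra of filtered vector spaces.
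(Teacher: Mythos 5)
Your proposal is, in substance, the paper's own argument: induction on the total degree $N$, the observation that (H2) for the base case and (H3)$+$(H4) for $N\geq 1$ force the top degree to increase strictly under each $T_{j}$, hence $\pi_{\mathbf{i}+\mathbf{j}}\circ\mathbf{T}^{\mathbf{i}}$ is injective on $C_{\mathbf{j}}$; (H5) to separate the leading contributions coming from distinct $\mathbf{j}$ with the same $\mathbf{i}+\mathbf{j}$; and (H1) to guarantee $A_{\mathbf{n}}\neq\emptyset$. The one place you genuinely deviate is the choice of $A_{\mathbf{n}}$: you take a basis of a complement of the full image space $W_{\mathbf{n}}=\sum_{\mathbf{j}<\mathbf{n}}(\pi_{\mathbf{n}}\circ\mathbf{T}^{\mathbf{n}-\mathbf{j}})(C_{\mathbf{j}})$, whereas the paper takes a complement of the span of $(\pi_{\mathbf{n}}\circ\mathbf{T}^{\mathbf{n}-\mathbf{j}})(A_{\mathbf{j}})$ only. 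Your variant makes nonemptiness of $A_{\mathbf{n}}$ immediate from (H1), but it shifts the weight onto the assertion you state without proof, namely that the leading terms of the previously constructed vectors span $W_{\mathbf{n}}$; this is not automatic, since $A_{\mathbf{j}}$ spans only a complement inside $C_{\mathbf{j}}$. It is nevertheless true: by the inductive hypothesis $C_{\mathbf{j}}=\sum_{\mathbf{j}'\leq\mathbf{j}}\pi_{\mathbf{j}}\mathbf{T}^{\mathbf{j}-\mathbf{j}'}(\operatorname{span}A_{\mathbf{j}'})$, and applying $\pi_{\mathbf{n}}\circ\mathbf{T}^{\mathbf{n}-\mathbf{j}}$, using commutativity and the fact that the components of degree $<|\mathbf{j}|$ are sent into $B_{|\mathbf{n}|-1}$ and hence killed by $\pi_{\mathbf{n}}$, one gets $(\pi_{\mathbf{n}}\circ\mathbf{T}^{\mathbf{n}-\mathbf{j}})(C_{\mathbf{j}})\subseteq\sum_{\mathbf{j}'<\mathbf{n}}(\pi_{\mathbf{n}}\circ\mathbf{T}^{\mathbf{n}-\mathbf{j}'})(\operatorname{span}A_{\mathbf{j}'})$, which is exactly the identification your spanning step needs. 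The paper's choice of complement sidesteps this verification entirely (spanning of $B_{N+1}$ is then automatic), at the mild cost of citing (H1) together with the injectivity of the maps $\pi_{\mathbf{n}}\circ\mathbf{T}^{\mathbf{n}-\mathbf{j}}$ to see $A_{\mathbf{n}}\neq\emptyset$. Apart from this fillable point, your leading-term bookkeeping, the deduction of disjointness from independence of the indexed family, and the passage from the filtration statement to freeness over $k[X_{1},\dots,X_{d}]$ all match the paper's proof.
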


\begin{proof}
We will construct the sets $A_{\mathbf{n}}$ inductively. Set $A_{\mathbf{0}}\subset C_{\mathbf{0}}$
to be a basis. Since ${\rm dim}\,C_{\mathbf{e}_{j}}>{\rm dim}\,C_{\mathbf{0}}$
for each $j$ we can take $A_{\mathbf{e}_{j}}\subset C_{\mathbf{e}_{j}}$
to be a non-empty set extending the linearly independent set $T_{j}(A_{\mathbf{0}})$
to a basis of $C_{\mathbf{e}_{j}}$. Thus we have constructed $A_{\mathbf{n}}$
for all $\bfn$ with $|\mathbf{n}|=1$. 

Now, we define ${\rm top}(f):={\rm max}\{|\mathbf{j}|\,|\,\pi_{\mathbf{j}}(f)\neq0\}$.
Then for each $j$ and each $N\geq0$ we have the following condition
\[
({\rm H4}^{\prime})\text{ --- }{\rm top}(f)=N\implies{\rm top}(T_{j}(f))=N+1.
\]
For $N=0$ this is essentially the hypothesis (H2). For $N\geq1$
if ${\rm top}(f)=N$ and ${\rm top}(T_{j}(f))\leq N$ (note that by
the hypothesis (H3) we know that ${\rm top}(T_{j}(f))\leq N+1$) then
by the hypothesis (H4) we know $f\in B_{N-1}$ and hence ${\rm top}(f)\leq N-1$,
a contradiction. Thus each $T_{j}$ is injective. Thus, if $f\in C_{\mathbf{j}}$,
then the \emph{top multi-index component} of $\mathbf{T}^{\mathbf{i}}(f)$
is \emph{precisely} $C_{\mathbf{i}+\mathbf{j}}$. Consequently, the
maps $\pi_{\mathbf{i}+\mathbf{j}}\circ\mathbf{T}^{\mathbf{i}}:C_{\mathbf{j}}\to C_{\mathbf{i}+\mathbf{j}}$
are injective. 

Now suppose $N\geq1$ and we have constructed non-empty subsets $A_{\mathbf{j}}\subset C_{\mathbf{j}}$
for all $\mathbf{j}$ with $|\mathbf{j}|\leq N$ such that 
\[
\bigsqcup_{|\mathbf{i}|+|\mathbf{j}|\leq\ell}\mathbf{T}^{\mathbf{i}}(A_{\mathbf{j}})
\]
is a basis for $B_{\ell}$ for all $\ell\leq N$. We consider the
following set 
\[
\mathcal{E}:=\bigsqcup_{\text{\ensuremath{\substack{|\mathbf{i}|+|\mathbf{j}| = N+1 \\
 |\mathbf{j}|\le N
}
}}} \mathbf{T}^{\mathbf{i}}(A_{\mathbf{j}}). 
\]
At first, we observe that the sets $\mathbf{T}^{\mathbf{i}}(A_{\mathbf{j}})$
appearing in the above union are indeed disjoint. Suppose $(\mathbf{i},\mathbf{j})\neq(\mathbf{i}^{\prime},\mathbf{j}^{\prime})$
with $|\mathbf{i}|+|\mathbf{j}|=|\mathbf{i}^{\prime}|+|\mathbf{j}^{\prime}|=N+1$.
We first consider the case when $\mathbf{j}=\mathbf{j}^{\prime}$
so that $|\mathbf{i}|=|\mathbf{i}^{\prime}|$ and $\mathbf{i}\neq\mathbf{i}^{\prime}$.
Then for $a,b\in A_{\mathbf{j}}$ we look at the top multi-index\emph{
}of $\mathbf{T}^{\mathbf{i}}(a)$ which is $\mathbf{i}+\mathbf{j}$;
whereas the top multi-index of the element $\mathbf{T}^{\mathbf{i}^{\prime}}(b)$
is $\mathbf{i}^{\prime}+\mathbf{j}$ and these are distinct. Next,
consider the case when $\mathbf{j}\neq\mathbf{j}^{\prime}$. In this
case if $\mathbf{i}+\mathbf{j}\neq\mathbf{i}^{\prime}+\mathbf{j}^{\prime}$
then for $a\in A_{\mathbf{j}}$ and $b\in A_{\mathbf{j}^{\prime}}$
the top components of $\mathbf{T}^{\mathbf{i}}(a)$ and $\mathbf{T}^{\mathbf{i}^{\prime}}(b)$
are in $C_{\mathbf{i}+\mathbf{j}}$ and $C_{\mathbf{i}^{\prime}+\mathbf{j}^{\prime}}$
respectively, which are different. Finally, we consider the case when
$\mathbf{j}\neq\mathbf{j}^{\prime}$ and $\bfn:=\mathbf{i}+\mathbf{j}=\mathbf{i}^{\prime}+\mathbf{j}^{\prime}$.
Then $\bfj,\bfj^{\prime}<\bfn$ coordinatewise. Hence, by the hypothesis
(H5) the top components of $\mathbf{T}^{\mathbf{i}}(a)$ and $\mathbf{T}^{\mathbf{i}^{\prime}}(b)$
are in different direct summands and hence they can never be equal.

Next, we show that $\mathcal{E}$ is linearly independent. Assume
a linear relation of the form 
\[
\sum_{\{(\bfi,\bfj)\,|\,|\bfi|+|\bfj|=N+1,\,|\bfj|\leq N\}}\sum_{a\in A_{\bfj}}\lambda_{\bfi,\bfj,a}\bfT^{\bfi}(a)=0.
\]
Fix any multi-index $\bfn$ with $|\bfn|=N+1$. We project onto $C_{\bfn}$
and obtain
\[
\sum_{\{\bfj<\bfn\,|\,|\bfj|\leq N\}}(\pi_{\bfn}\circ\bfT^{\bfn-\bfj})\left(\sum_{a\in A_{\bfj}}\lambda_{\bfn-\bfj,\bfj,a}a\right)=0.
\]
Now since for each $\bfj\in\{\bfj<\bfn\,|\,|\bfj|\leq N\}$ we know
$(\pi_{\bfn}\circ\bfT^{\bfn-\bfj})\left(\sum_{a\in A_{\bfj}}\lambda_{\bfn-\bfj,\bfj,a}a\right)\in(\pi_{\bfn}\circ\bfT^{\bfn-\bfj})(C_{\bfj})$
and these are direct summands, therefore by injectivity of $\pi_{\bfn}\circ\bfT^{\bfn-\bfj}:C_{\bfj}\to C_{\bfn}$
we have $\sum_{a\in A_{\bfj}}\lambda_{\bfn-\bfj,\bfj,a}a=0$ for all
such $\bfj$. Since by induction hypothesis $A_{\bfj}$ are linearly
independent we have each $\lambda_{\bfn-\bfj,\bfj,a}=0$. Thus we
have established linear independence of $\mathcal{E}$. 

Now consider the family 
\[
\bigsqcup_{\{(\bfi,\bfj)\,|\,|\bfi|+|\bfj|\leq N\}}\bfT^{\bfi}(A_{\bfj})\sqcup\bigsqcup_{\{(\bfi,\bfj)\,|\,|\bfj|\leq N,\,|\bfi|+|\bfj|=N+1\}}\bfT^{\bfi}(A_{\bfj}).
\]
This set is linearly independent since the second set is linearly
independent and has top components of degree $N+1$. Now we fix $\bfn$
with $|\bfn|=N+1$. We take $A_{\bfn}\subset C_{\bfn}$ to be a basis
of a complement of 
\[
\text{span}\left(\bigsqcup_{\{\bfj<\bfn\,|\,|\bfj|\leq N\}}\left(\pi_{\bfn}\circ\bfT^{\bfn-\bfj}\right)\left(A_{\bfj}\right)\right).
\]
Then the \emph{growth} condition (H1) together with injectivity of
the maps $\pi_{\bfn}\circ\bfT^{\bfn-\bfj}$ guarantee that $A_{\bfn}\neq\emptyset$.
Thus we obtain
\[
\bigsqcup_{\{(\bfi,\bfj)\,|\,|\bfi|+|\bfj|\leq N\}}\bfT^{\bfi}(A_{\bfj})\sqcup\bigsqcup_{\{(\bfi,\bfj)\,|\,|\bfj|\leq N,\,|\bfi|+|\bfj|=N+1\}}\bfT^{\bfi}(A_{\bfj})\sqcup\bigsqcup_{\{\bfn\,|\,|\bfn|=N+1\}}A_{\bfn}
\]
as a basis of $B_{N+1}$.
\end{proof}
\begin{rem}
\label{rem: d=00003D1 remark}-- For $d=1$ the hypothesis (H5) is
not required. Note that it was invoked in the proof in two instances
:
\begin{enumerate}
\item In showing that 
\[
\{T^{i}(A_{j})\,|\,i+j=N+1,\,j\leq N\}
\]
for $N\geq1$ forms a disjoint collection of sets. This can be proved
without (H5) as follows. Indeed, if $(i,j)\neq(i^{\prime},j^{\prime})$
then $j\neq j^{\prime}$. For $a\in A_{j}$ and $b\in A_{j^{\prime}}$
the equality 
\[
T^{i}(a)=T^{i^{\prime}}(b)
\]
gives a contradiction by combining injectivity of $T$ and the fact
that $T^{i-1}(a),T^{i^{\prime}-1}(b)\in\bigsqcup_{i+j\leq N}T^{i}(A_{j})$;
the disjointness of the family $\{T^{i}(A_{j})\,|\,i+j\leq N\}$ is
assumed in the induction hypothesis. 
\item In showing that
\[
\bigsqcup_{\{(i,j)\,|\,i+j\leq N\}}T^{i}(A_{j})\sqcup\bigsqcup_{\{(i,j)\,|\,i+j=N+1,\,j\leq N\}}T^{i}(A_{j})
\]
is linearly independent for $N\geq1$. Again this can be proved without
(H5) by a combined use of (H4) and the induction hypothesis : that
$\bigsqcup_{i+j\leq\ell}T^{i}(A_{j})$ is a basis of $B_{\ell}$ for
every $\ell\leq N$. Indeed, by way of contradiction suppose we have
a non-trivial linear combination of elements of the above set written
as $f_{1}+f_{2}=0$ where $f_{1}$ is a linear combination of elements
from the first set and $f_{2}$ is a linear combination of elements
from the second set. We can write $f_{2}$ as $T(f_{2}^{\prime})$
where $f_{2}^{\prime}$ is a linear combination of elements from $\bigsqcup_{j+i=N}T^{i}(A_{j})$
with same coefficients that appear in the combination representing
$f_{2}$. Thus $f_{2}^{\prime}\in B_{N}$ and $T(f_{2}^{\prime})=-f_{1}\in B_{N}$
and by condition (3) we have $f_{2}^{\prime}\in B_{N-1}$ since we
have assumed $N\geq1$. But $\bigsqcup_{j+i\leq N}T^{i}(A_{j})=\bigsqcup_{j+i\leq N-1}T^{i}(A_{j})\sqcup\bigsqcup_{j+i=N}T^{i}(A_{j})$
is a basis of $B_{N}$ and $\bigsqcup_{j+i\leq N-1}T^{i}(A_{j})$
is a basis of $B_{N-1}$ by the induction hypothesis. Therefore, all
coefficients in the linear combination representing $f_{2}^{\prime}$
(and hence $f_{2}$) are zero. Consequently, all coefficients representing
$f_{1}$ are also zero.
\end{enumerate}
Hence, we can choose $A_{N+1}$ to be a basis of a complement of 
\[
\text{span}\left(\pi_{N+1}\left(\bigsqcup_{\{(i,j)\,|\,i+j=N+1,\,j\leq N\}}T^{i}(A_{j})\right)\right)
\]
noting that $A_{N+1}\neq\emptyset$, since 
\[
\pi_{N+1}\left(\bigsqcup_{\{(i,j)\,|\,i+j=N+1,\,j\leq N\}}T^{i}(A_{j})\right)=\pi_{N+1}\left(T\left(\bigsqcup_{\{(i,j)\,|\,i+j=N,\,j\leq N\}}T^{i}(A_{j})\right)\right)\subset(\pi_{N+1}\circ T)(C_{N})\subsetneq C_{N+1}
\]
because $\pi_{N+1}\circ T$ is injective and ${\rm dim}\,C_{N}<{\rm dim}\,C_{N+1}$. 

However, when $d>1$ then without (H5) we can not ascertain the first
point made above. On a thorough scrutiny of the proof it will become
clear that (H5) is used really to overcome the problem of having to
show that : for two pairs $(\bfi,\bfj)$ and $(\bfi^{\prime},\bfj^{\prime})$
(with $|\bfj|,|\bfj^{\prime}|\leq N$, $|\bfi|+|\bfj|=|\bfi^{\prime}|+|\bfj^{\prime}|=N+1$)
satisfying $\bfi+\bfj=\bfi^{\prime}+\bfj^{\prime}$ and $\bfj\neq\bfj^{\prime}$,
the sets $\bfT^{\bfi}(A_{\bfj})$ and $\bfT^{\bfi^{\prime}}(A_{\bfj^{\prime}})$
are disjoint. More specifically, when $\bfi$ is of the form $(*,0,*,0,\dots)$
and $\bfi^{\prime}$ is of the form $(0,*,0,*,\dots)$, that is there
does not exist any coordinate $l$ for which $i_{l},i_{l}^{\prime}$
are both positive. Otherwise, if such a coordinate $l$ existed, then
we could have used injectivity of $T_{l}$ and the induction hypothesis
to conclude the disjointness. For example, if $d=2$ and $a\in C_{(0,1)},b\in C_{(1,0)}$
then without (H5) there is no way to ensure that $T_{1}(a)$ does
not coincide with $T_{2}(b)$ in $C_{(1,1)}$ (note that $T_{1}(a)\in C_{(0,1)}\oplus C_{(1,1)}$
and $T_{2}(b)\in C_{(1,0)}\oplus C_{(1,1)}$, thus if they were to
be equal then they must be elements of $C_{(1,1)}$). 
\end{rem}

\section{Freeness of $\protect\ind_{K_{0}}^{G_{S}}(\protect\sig)$ as a Hecke
module}\label{sec: main section}

In this section we take $V={\rm ind}_{K_{0}}^{G_{S}}(\sig)$ and $T=\tau$
in the framework of Lemma \ref{lem: technical lemma}. Here obviously
$d=1$. At first we introduce some new objects and notations borrowed
from \cite{Hu12,Xu_freeness}. First recall that we have the well
known Cartan-Iwahori decomposition : $G_{S}=\bigsqcup_{n\in\mathbb{Z}}K_{0}\alpha_{0}^{n}I_{S}(1)$.
For $n\geq0,$ we denote by $R_{n}^{+}(\sigma_{\vec{r}})$ (resp.
$R_{n}^{-}(\sigma_{\vec{r}})$) the subspace of functions in $\ind_{K_{0}}^{G_{S}}(\sigma_{\vec{r}})$
which are supported on $K_{0}\alpha_{0}^{n}I_{S}(1)=K_{0}\alpha_{0}^{n}U_{S}(\calO)$
(resp. $K_{0}\alpha_{0}^{-(n+1)}I_{S}(1)=K_{0}\alpha_{0}^{-(n+1)}\bar{U}_{S}(\frakp)$).
We write $R_{n}^{+}(\sigma_{\vec{r}})=[U_{S}(\calO)\alpha_{0}^{-n},\sigma_{\vec{r}}]$
for $n\geq0$, and $R_{n-1}^{-}(\sigma_{\vec{r}})=[\bar{U}_{S}(\frakp)\alpha_{0}^{n},\sigma_{\vec{r}}]$
for $n\geq1$. By convention $R_{-1}^{-}(\sigma_{\vec{r}}):=R_{0}^{+}(\sigma_{\vec{r}}).$
We also set $C_{0,\sigma_{\vec{r}}}:=R_{0}^{+}(\sigma_{\vec{r}}),\,C_{n,\sigma_{\vec{r}}}:=R_{n}^{+}(\sigma_{\vec{r}})\oplus R_{n-1}^{-}(\sigma_{\vec{r}})\text{ for }n\geq1$
and $B_{n,\sigma_{\vec{r}}}:=\bigoplus_{k\leq n}C_{k,\sigma_{\vec{r}}}\text{ for }n\geq0$.

As mentioned in Remark \ref{rem: d=00003D1 remark}, we need to check
the following conditions are satisfied
\begin{enumerate}
\item[(C1)]  Each $C_{n,\sig}$ is finite dimensional and ${\rm dim}\,C_{n,\sig}>\sum_{\{k\,|\,k<n\}}C_{k,\sig}$.
\item[(C2)]  $\tau(C_{0,\sig})\subset C_{1,\sig}$ and $\tau|_{C_{0,\sig}}:C_{0,\sig}\to C_{1,\sig}$
is injective.
\item[(C3)]  For every $n\geq1$ we have $\tau(C_{n\sig})\subset C_{n-1,\sig}\oplus C_{n,\sig}\oplus C_{n+1,\sig}$.
\item[(C4)]  For each $n\geq0$ : if $f\in B_{n+1,\sig}$ and $\tau(f)\in B_{n+1,\sig}$
then $f\in B_{n,\sig}$.
\end{enumerate}
See the following remark for (C1).

\begin{rem}
\label{rem: condition C1} -- It is often useful to interpret the
subspaces $C_{n,\sig}$ and $B_{n,\sig}$ in terms of the Bruhat-Tits
tree of ${\rm SL}_{2}(F)$. For a basic introduction to Bruhat-Tits
theory with the example of $\SL_{2}$ worked out in full detail one
can see \cite[Section 4.2]{Rab}; also the recent work \cite{Herbert et al}
works out the example $\SL_{n}$ with copious details. Note that $C_{0,\sig}$
is the subspace of all functions supported on the central vertex corresponding
to the subgroup $K_{0}.$ For $n\geq1$ the subspace $C_{n,\sig}$
consists of all functions supported on vertices which are at a distance
of $2n$ from the central vertex. Since each vertex of the tree has
degree $q:=|k_{F}|$, we know that ${\rm dim}\,C_{n,\sig}=(q+1)q^{2n-1}\cdot{\rm dim}\,\sig$
for $n\geq1$ and ${\rm dim}\,C_{0,\sig}={\rm dim}\,\sig$. Hence
the inequality 
\[
{\rm dim}\,C_{n,\sig}>\sum_{m=0}^{n-1}{\rm dim}\,C_{m,\sig}
\]
is clear when $n=1$. For $n\geq2$ we have to show 
\[
(q+1)q^{2n-1}>1+(q+1)\cdot q\cdot\frac{q^{2(n-1)}-1}{q^{2}-1},
\]
which can be easily proved using elementary algebra noting that $q\geq2$.
Thus the condition (C1) above checks out.
\end{rem}

\begin{rem}
\label{rem : I_S(1) invariants of the sphere R_n(sigma)} -- Also
note that for $n\geq0$ both the spaces $R_{n}^{+}(\sig)$ and $R_{n-1}^{-}(\sig)$
are $I_{S}(1)$-stable. For $n\geq0$ we have $f_{-n}\in R_{n}^{+}(\sigma_{\vec{r}})^{I_{S}(1)}$,
and for $n\geq1$ we have $f_{n}\in R_{n-1}^{-}(\sigma_{\vec{r}})^{I_{S}(1)}$.
Also, every function in $R_{n}^{+}(\sigma_{\vec{r}})^{I_{S}(1)}$
(resp. $R_{n-1}^{-}(\sigma_{\vec{r}})^{I_{S}(1)}$) is determined
by its value on $\alpha_{0}^{n}$ (resp. $\alpha_{0}^{-n}$). Hence,
the spaces $R_{n}^{+}(\sigma_{\vec{r}})^{I_{S}(1)}$ and $R_{n-1}^{-}(\sigma_{\vec{r}})^{I_{S}(1)}$
are one dimensional and generated by $f_{-n}$ and $f_{n}$ respectively.
\end{rem}

We now state the main theorem of this article. 
\begin{thm}
\label{thm:freeness theorem} --- The compactly induced representation
$\ind_{K_{0}}^{G_{S}}(\sig)$ is a free module of infinite rank over
the spherical Hecke algebra $\mathcal{H}(G_{S},K_{0},\sig)$.
\end{thm}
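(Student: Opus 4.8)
The plan is to verify that the quadruple $\bigl(V = \ind_{K_0}^{G_S}(\sig),\ \tau,\ \{C_{n,\sig}\}_{n\geq 0},\ \{B_{n,\sig}\}_{n\geq 0}\bigr)$ satisfies the hypotheses of Lemma~\ref{lem: technical lemma} in the special case $d=1$ (so that, by Remark~\ref{rem: d=00003D1 remark}, condition (H5) is automatic), and then invoke that lemma together with $\End_{G_S}\bigl(\ind_{K_0}^{G_S}(\sig)\bigr) = \ffpbar[\tau]$ to conclude that $\bigsqcup_{n\geq 0} A_n$ is a module basis over the polynomial algebra $\ffpbar[\tau] = \mathcal{H}(G_S, K_0, \sig)$. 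The infinitude of the rank is then immediate: since each $C_{n,\sig}$ is non-trivial and the sets $A_n$ are non-empty for every $n$ (as produced by the lemma), the basis $\bigsqcup_{n\geq0} A_n$ is infinite.

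Concretely, I would dispatch the four conditions (C1)--(C4) in turn. Condition (C1) is the growth/finite-dimensionality statement, already checked in Remark~\ref{rem: condition C1} via the tree interpretation: $\dim C_{n,\sig} = (q+1)q^{2n-1}\dim\sig$ for $n\geq 1$ dominates the partial sum $\sum_{m<n}\dim C_{m,\sig}$ by the elementary estimate given there. For (C3), the triangularity statement, I would apply the explicit formula \eqref{eq:action of tau on standard function} for $\tau$ on a standard function $[g,v]$ with $g$ ranging over $U_S(\calO)\alpha_0^{-n}$ and $\bar U_S(\frakp)\alpha_0^{n}$: the two sums in that formula multiply $g$ on the right by matrices of the form $\left(\begin{smallmatrix}1 & A(\lambda)\\ 0 & 1\end{smallmatrix}\right)\alpha_0^{-1}$ and $\left(\begin{smallmatrix}1 & 0\\ \varpi_F A(\mu) & 1\end{smallmatrix}\right)\alpha_0$, and a direct bookkeeping of the resulting $K_0$-double cosets (equivalently, of which vertices of the Bruhat--Tits tree the new support points land on --- distance at most $2$ away from the old support) shows the image lies in $C_{n-1,\sig}\oplus C_{n,\sig}\oplus C_{n+1,\sig}$; the same computation with $n=0$ gives $\tau(C_{0,\sig})\subset C_{1,\sig}$, the first half of (C2). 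The injectivity half of (C2) is exactly Lemma~\ref{lem:tau injective on C_0}, which I would simply cite.

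The genuinely delicate step is (C4), the filtration-compatibility condition: if $f\in B_{n+1,\sig}$ and $\tau(f)\in B_{n+1,\sig}$, then $f\in B_{n,\sig}$. Equivalently (using (C3)), one must show that if $f$ has a non-zero component in $C_{n+1,\sig}$ then $\tau(f)$ has a non-zero component in $C_{n+2,\sig}$, i.e. the ``top-degree'' part of $\tau$ is injective on each $C_{n+1,\sig}$. Here I would exploit the $I_S(1)$-equivariance of $\tau$ and the structure of $\ind_{K_0}^{G_S}(\sig)$ as a direct sum of $K_0$-representations supported on the vertices of the tree: on each sphere $R_{n+1}^{+}(\sig)$ and $R_{n}^{-}(\sig)$, the leading component of $\tau$ is computed from the second (resp.\ first) sum in \eqref{eq:action of tau on standard function}, which involves the operator $U_{\vec r}$ and the Weyl element $w_0$, and one reduces to checking injectivity of a suitable $K_0$-equivariant map on $I_S(1)$-invariants. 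By Remark~\ref{rem : I_S(1) invariants of the sphere R_n(sigma)} these invariant subspaces are one-dimensional, spanned by the $f_n$, and by the announced computation of $\tau(f_n)$ (Lemma~\ref{lem:tau(f_n)}, together with \cite[Propositions 4.9 and 4.10]{Das24}) the leading coefficient in the expansion of $\tau(f_n)$ along the basis $\{f_m\}$ is non-zero; a Frobenius-reciprocity / generation argument (that the $f_n$ generate the relevant spheres as $K_0$-modules, using Proposition~\ref{prop:spanning set for weights} and Remark~\ref{rem:Y^r generates weight as U-module}) then upgrades this to injectivity of the top part of $\tau$ on all of $C_{n+1,\sig}$. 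This is the step I expect to require the most care, since it is where the group-specific structure of $\SL_2$ --- the explicit shape of the Hecke operator and the representation theory of $\SL_2(\ffq)$ --- actually enters, the other conditions being either formal (C3, first half of C2) or already recorded (C1, second half of C2).

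With (C1)--(C4) in hand, Lemma~\ref{lem: technical lemma} (case $d=1$) produces non-empty subsets $A_n\subset C_{n,\sig}$ such that $\bigsqcup_{i+j\leq N}\tau^{i}(A_j)$ is a basis of $B_{N,\sig}$ for every $N$; passing to the union over all $N$ and using $\bigcup_N B_{N,\sig} = \ind_{K_0}^{G_S}(\sig)$ gives that $\bigsqcup_{n\geq 0} A_n$ is a free $\ffpbar[\tau]$-module basis, and since there is one non-empty $A_n$ for each $n\geq 0$ the rank is infinite. This completes the proof.
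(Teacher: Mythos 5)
Your overall skeleton coincides with the paper's: verify (C1)--(C4) for $\bigl(\ind_{K_{0}}^{G_{S}}(\sig),\tau,\{C_{n,\sig}\}\bigr)$ and feed them into Lemma \ref{lem: technical lemma} with $d=1$. Your treatment of (C1), (C3) and both halves of (C2) is exactly what the paper does, and your reformulation of (C4) as injectivity of the top-degree part $\pi_{n+2}\circ\tau$ on $C_{n+1,\sig}$ is a correct equivalence. The problem is the mechanism you offer for that last step. You propose to check injectivity on the $I_{S}(1)$-invariants (spanned by $f_{-(n+1)}$ and $f_{n+1}$, so the invariants of the full sphere $C_{n+1,\sig}$ are two-dimensional, not one-dimensional) and then ``upgrade'' to all of $C_{n+1,\sig}$ by a generation argument: that $f_{\pm(n+1)}$ generate the spheres as $K_{0}$-modules. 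This inference is invalid: a $K_{0}$-equivariant map can be non-zero on a generator of a cyclic module and still have a large kernel, so generation plus non-vanishing of the leading coefficient of $\tau(f_{n})$ does not yield injectivity. Generation controls how big the image is, not how small the kernel is.

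The correct mechanism --- and the one the paper uses in Lemma \ref{lem: condition 3-top component always increases} --- runs in the opposite (``socle'') direction and needs no generation statement at all. The kernel of $\pi_{n+2}\circ\tau|_{C_{n+1,\sig}}$ (equivalently, $C_{n+1,\sig}\cap M_{n+1,\sig}$ in the paper's notation) is an $I_{S}(1)$-stable subspace, because $\tau$ is $G_{S}$-equivariant and each sphere is $I_{S}(1)$-stable (Remark \ref{rem : I_S(1) invariants of the sphere R_n(sigma)}). Since $I_{S}(1)$ is a pro-$p$ group acting smoothly on an $\ffpbar$-vector space, any non-zero $I_{S}(1)$-stable subspace contains a non-zero $I_{S}(1)$-fixed vector; such a vector is $\lambda f_{-(n+1)}+\mu f_{n+1}$ with $(\lambda,\mu)\neq(0,0)$, and Lemma \ref{lem:tau(f_n)}(ii) shows its image under $\tau$ has $C_{n+2,\sig}$-component $\lambda f_{-(n+2)}+\mu f_{n+2}\neq0$, a contradiction. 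So you should replace your ``Frobenius-reciprocity / generation'' upgrade by this pro-$p$ fixed-vector argument; with that substitution the rest of your proposal goes through and is essentially the paper's proof.
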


To prove this we will show that the Hecke operator $\tau$ satisfies
the conditions (C2)-(C4) above. 

\subsection{Action of $\tau$ on the subspaces $C_{n,\protect\sig}$}

We will now show that the Hecke operator $\tau$ satisfies the conditions
(C2) and (C3).
\begin{lem}
\emph{\label{lem: action of tau on 2n circles} --- }We have the
following :
\begin{enumerate}
\item[(i)] \emph{ $\tau(R_{0}^{+}(\sigma_{\vec{r}}))\subseteq R_{1}^{+}(\sigma_{\vec{r}})\oplus R_{0}^{-}(\sigma_{\vec{r}}).$}
\item[(ii)] \emph{ $\tau(R_{n}^{+}(\sigma_{\vec{r}}))\subseteq R_{n-1}^{+}(\sigma_{\vec{r}})\oplus R_{n}^{+}(\sigma_{\vec{r}})\oplus R_{n+1}^{+}(\sigma_{\vec{r}})$,
}for\emph{ $n\geq1.$}
\item[(iii)]  $\tau(R_{n}^{-}(\sigma_{\vec{r}}))\subseteq R_{n-1}^{-}(\sigma_{\vec{r}})\oplus R_{n}^{-}(\sigma_{\vec{r}})\oplus R_{n+1}^{-}(\sigma_{\vec{r}})$,
for $n\geq0.$
\end{enumerate}
Thus $\tau(C_{0,\sig})\subseteq C_{1,\sig}$ and $\tau(C_{n,\sig})\subseteq C_{n-1,\sig}\oplus C_{n,\sig}\oplus C_{n+1,\sig}$
for $n\geq1$,
\end{lem}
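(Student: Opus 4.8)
The plan is to compute $\tau$ on the standard generators of each sphere space and read off the support of the image from the explicit formula \eqref{eq:action of tau on standard function}. Recall that $R_n^+(\sigma_{\vec r}) = [U_S(\mathcal O)\alpha_0^{-n},\sigma_{\vec r}]$ is spanned by the functions $[u\alpha_0^{-n},v]$ with $u\in U_S(\mathcal O)$ and $v\in\sigma_{\vec r}$, and similarly $R_n^-(\sigma_{\vec r}) = [\bar U_S(\mathfrak p_F)\alpha_0^{n+1},\sigma_{\vec r}]$ is spanned by $[\bar u\alpha_0^{n+1},v]$ with $\bar u\in\bar U_S(\mathfrak p_F)$. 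So it suffices, by linearity of $\tau$, to apply \eqref{eq:action of tau on standard function} to such standard functions, and for each of the $q^2+q$ resulting terms determine which double coset $K_0\alpha_0^m I_S(1)$ the group element on the left of the bracket lands in. Since a standard function $[g,v]$ is supported on $K_0 g^{-1}$, this amounts to computing the Cartan--Iwahori (equivalently, Cartan) position of the relevant matrices; because $I_S(1)\subset K_0$ only the $K_0$-double coset, i.e.\ the distance in the tree from the central vertex, actually matters.

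First I would treat part (i). Starting from $[\alpha_0^{0}\cdot k, v]=[k,v]$ with $k\in K_0$, or more simply from a standard function supported on the central vertex, the first sum in \eqref{eq:action of tau on standard function} contributes terms with group element $k\left(\begin{smallmatrix}1 & A(\lambda)\\0&1\end{smallmatrix}\right)\alpha_0^{-1}$, and a direct matrix computation shows these lie in $K_0\alpha_0^{-1}I_S(1)$ (distance $2$ on the ``negative'' side), i.e.\ in $R_0^-(\sigma_{\vec r})$; the second sum contributes terms with group element $k\left(\begin{smallmatrix}1&0\\\varpi_F A(\mu)&1\end{smallmatrix}\right)\alpha_0$, which lie in $K_0\alpha_0 I_S(1)$, i.e.\ in $R_1^+(\sigma_{\vec r})$. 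For parts (ii) and (iii) I would run the same computation with $g = u\alpha_0^{-n}$ (resp. $g=\bar u\alpha_0^{n+1}$), $n\ge 1$ (resp. $n\ge 0$); here the point is that conjugating the unipotent factors $\left(\begin{smallmatrix}1&A(\lambda)\\0&1\end{smallmatrix}\right)$ and $\left(\begin{smallmatrix}1&0\\\varpi_F A(\mu)&1\end{smallmatrix}\right)$ past the diagonal $\alpha_0^{\pm n}$ changes the valuation of the off-diagonal entry by $\mp 2n$, and one must split into cases according to whether the resulting entry lies in $\mathcal O_F^\times$, in $\mathfrak p_F$, or has negative valuation, each case pushing the vertex one step closer, keeping it at the same distance, or one step farther from the center. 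Collecting the three possibilities yields exactly the stated inclusions in $R_{n-1}^{\pm}\oplus R_n^{\pm}\oplus R_{n+1}^{\pm}$, and the final assertion about $C_{n,\sigma_{\vec r}}$ follows by adding the $+$ and $-$ contributions and using the convention $R_{-1}^-(\sigma_{\vec r}) = R_0^+(\sigma_{\vec r})$.

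The main obstacle is organizing the Cartan position computation cleanly. The subtlety is that the ``distance from the central vertex'' is governed by the Cartan (elementary-divisor) type of a $2\times 2$ matrix over $\mathcal O_F$, and when one multiplies $u\alpha_0^{-n}$ by one of the two unipotent-times-$\alpha_0^{\pm1}$ factors the product is not obviously in a nice normal form: one typically has to left-multiply by a suitable element of $K_0$ (a permutation/unipotent move in $\SL_2(\mathcal O_F)$) to bring it into the form $k'\alpha_0^m \iota$ with $\iota\in I_S(1)$, and the integer $m$ depends on the valuation of a coordinate that itself depends on $\lambda$ or $\mu$. Rather than doing this in full generality I would phrase it via the tree: the vertex $K_0 g$ moves under the Hecke correspondence to its $q+1$ neighbors-at-distance-two, and combinatorially a neighbor of a vertex at distance $2n$ from the center sits at distance $2n-2$, $2n$, or $2n+2$; one then just has to confirm that the ``negative branch'' ($R^-$) and the ``positive branch'' ($R^+$) of the tree are each preserved by this nearest-neighbor structure, which is exactly the content of (i)--(iii). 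I expect the bracket-algebra identities $[gh,v]=[g,\sigma(h)v]$ and $g\cdot[g',v]=[gg',v]$ from the Preliminaries to absorb all the bookkeeping about which vector sits in the bracket, so that only the support statement needs checking.
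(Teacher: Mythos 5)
Your computational skeleton (apply formula \ref{eq:action of tau on standard function} to the standard spanning functions, push the unipotent factors across $\alpha_{0}^{\pm n}$, and case-split on valuations) is the same as the paper's, but there is a genuine gap in the reduction you make at the outset. You claim that, because $I_{S}(1)\subset K_{0}$, ``only the $K_{0}$-double coset, i.e.\ the distance in the tree from the central vertex, actually matters'', and your case analysis accordingly records only whether a support vertex moves one step closer, stays put, or moves one step farther. That information proves only the coarse final assertion $\tau(C_{n,\sig})\subseteq C_{n-1,\sig}\oplus C_{n,\sig}\oplus C_{n+1,\sig}$; it cannot yield the itemized statements (i)--(iii), which are strictly finer: they say $\tau$ preserves the branches $R^{+}$ and $R^{-}$ separately. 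The parenthetical ``Cartan--Iwahori (equivalently, Cartan)'' is false: since $w_{0}\alpha_{0}^{n}w_{0}^{-1}=\alpha_{0}^{-n}$ with $w_{0}\in K_{0}$, one has $K_{0}\alpha_{0}^{n}K_{0}=K_{0}\alpha_{0}^{-n}K_{0}$, so the $K_{0}$-double coset (the distance $2n$) does not distinguish $R_{n}^{+}(\sig)$ from $R_{n-1}^{-}(\sig)$, and a distance-only argument leaves open, e.g., a component of $\tau(R_{n}^{+}(\sig))$ inside $R_{n-2}^{-}(\sig)$, $R_{n-1}^{-}(\sig)$ or $R_{n}^{-}(\sig)$. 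To exclude this you must track, for each bracket element produced by the formula, its $U_{S}(\calO)$--$K_{0}$ (resp.\ $\bar{U}_{S}(\frakp)$--$K_{0}$) double coset, which is exactly what the paper does by explicit row--column reduction; alternatively you would have to actually prove the tree statement you defer to (``the branches are preserved by the nearest-neighbour structure''), but that \emph{is} the content of (i)--(iii), so it cannot simply be ``confirmed'' in passing. The finer version is not optional: it is what later allows $\tau(f_{\pm m})$ to be expanded in only three of the functions $f_{k}$ in the proof of Lemma \ref{lem:tau(f_n)}.

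A concrete symptom of the same confusion appears in your part (i): you assign the $\lambda$-sum to $R_{0}^{-}(\sig)$ and the $\mu$-sum to $R_{1}^{+}(\sig)$, and it is the other way around. A standard function $[h,w]$ lies in $R_{n}^{+}(\sig)$ exactly when $h\in U_{S}(\calO)\alpha_{0}^{-n}K_{0}$ (its support being $K_{0}h^{-1}$), so for $g\in U_{S}(\calO)$ the term $[g\,u(A(\lambda))\alpha_{0}^{-1},w]$ lies in $R_{1}^{+}(\sig)$ because $g\,u(A(\lambda))\in U_{S}(\calO)$, while $[g\,\bar{u}(\varpi_{F}A(\mu))\alpha_{0},w]$ lies in $R_{0}^{-}(\sig)$ (write $g\,\bar{u}(\varpi_{F}A(\mu))\in I_{S}(1)$ in Iwahori-factorized form and push the upper-unipotent part across $\alpha_{0}$ into $K_{0}$). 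The membership you invoke, $u(A(\lambda))\alpha_{0}^{-1}\in K_{0}\alpha_{0}^{-1}I_{S}(1)$, is true but irrelevant as stated: it is the inverse of the bracket element that governs the support, and this element-versus-support inversion is precisely the bookkeeping whose correct handling separates the $+$ and $-$ branches in (ii) and (iii). The swap happens not to affect the truth of (i), since both summands appear there, but it shows the proposal as written would not reliably produce the signed inclusions.
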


\begin{proof}
(i) It suffices to compute the action of $\tau$ on a standard function
$[u,v]\in R_{0}^{+}(\sigma_{\vec{r}})$, where $u\in U_{S}(\calO)$.
Using the formula \ref{eq:action of tau on standard function} the
required containment follows from the simple observation that if $\bar{u}\in\bar{U}_{S}(\frakp)$,
then $u\bar{u}\in I_{S}(1)$, so that we can write $u\bar{u}=\bar{u}_{1}tu_{2}$,
where $\bar{u}_{1}\in\bar{U}_{S}(\frakp),$ $t\in T_{S}(1+\frakp),$
and $u_{2}\in U_{S}(\calO).$ But then again we can write $u_{2}\alpha_{0}=\alpha_{0}u_{2}^{\prime}$
with $u_{2}^{\prime}\in U_{S}(\frakp^{2})\subset K_{0}.$ Hence, the
sum $\sum_{\mu}[u\bar{u}(\varpi_{F}A(\mu))\alpha_{0},U_{\vec{r}}v]\in R_{0}^{-}(\sigma_{\vec{r}})$. 

(ii) Again, take a standard function $[u(x)\alpha_{0}^{-n},v]\in R_{n}^{+}(\sigma_{\vec{r}})$
where $u(x)$ denotes an upper unipotent matrix with the top right
entry $x\in\calO$. Here, we look at the formula \ref{eq:action of tau on standard function}
and consider the elements of the form $u(x)\alpha_{0}^{-n}u(A(\lambda))\alpha_{0}^{-1}$
and $u(x)\alpha_{0}^{-n}\bar{u}(\varpi_{F}A(\mu))\alpha_{0}$ to determine
which $U_{S}(\calO)$-$K_{0}$ double coset they belong to. At first,
note that $u(x)\alpha_{0}^{-n}u(A(\lambda))\alpha_{0}^{-1}=u(x+\varpi_{F}^{2n}A(\lambda))\alpha_{0}^{-(n+1)}\in U_{S}(\calO)\alpha_{0}^{-(n+1)}.$
Next, for $\mu\in k_{F}$ we can easily see by using elementary row
and column reduction that : 
\[
\alpha_{0}^{-n}\bar{u}(\varpi_{F}A(\mu))\alpha_{0}=\begin{pmatrix}\begin{array}{cc}
\varpi_{F}^{n-1} & 0\\
\varpi_{F}^{-n}A(\mu) & \varpi_{F}^{-n+1}
\end{array}\end{pmatrix}\in\begin{cases}
U_{S}(\calO)\alpha_{0}^{-(n-1)}K_{0} & \text{if }\,\mu=0\\
U_{S}(\calO)\alpha_{0}^{-n}K_{0} & \text{if }\,\mu\neq0
\end{cases}.
\]
As a result we have \emph{$\tau(R_{n}^{+}(\sigma_{\vec{r}}))\subseteq R_{n-1}^{+}(\sigma_{\vec{r}})\oplus R_{n}^{+}(\sigma_{\vec{r}})\oplus R_{n+1}^{+}(\sigma_{\vec{r}})$,
}for\emph{ $n\geq1$}.

(iii) As before we take a standard function $[\bar{u}(y)\alpha_{0}^{n+1},v]\in R_{n}^{-}(\sigma_{\vec{r}})$
where $\bar{u}(y)$ denotes a lower unipotent matrix with the bottom
left entry $y\in\frakp$. In this case also we look at the formula
\ref{eq:action of tau on standard function} and consider the elements
$\bar{u}(y)\alpha_{0}^{n+1}u(A(\lambda))\alpha_{0}^{-1}$ and $\bar{u}(y)\alpha_{0}^{n+1}\bar{u}(\varpi_{F}A(\mu))\alpha_{0}$.
Then we determine which $\bar{U}_{S}(\frakp)$-$K_{0}$ double coset
they belong to. First, note that $\bar{u}(y)\alpha_{0}^{n+1}\bar{u}(\varpi_{F}A(\mu))\alpha_{0}=\bar{u}(y+\varpi_{F}^{2n+3}A(\mu))\alpha_{0}^{n+2}\in\bar{U}_{S}(\frakp)\alpha_{0}^{n+2}$.
Next, for $\lambda\in k_{F}^{2}$ we have by row and column reduction
the following : 
\[
\alpha_{0}^{n+1}u(A(\lambda))\alpha_{0}^{-1}=\begin{pmatrix}\begin{array}{cc}
\varpi_{F}^{-n} & \varpi_{F}^{-n-2}A(\lambda)\\
0 & \varpi_{F}^{n}
\end{array}\end{pmatrix}\in\begin{cases}
\bar{U}_{S}(\frakp)\alpha_{0}^{n+2}K_{0} & \text{if }\,A(\lambda)\in\calO^{\times}\\
\bar{U}_{S}(\frakp)\alpha_{0}^{n+1}K_{0} & \text{if }\,A(\lambda)\in\frakp\setminus\frakp^{2}\\
\bar{U}_{S}(\frakp)\alpha_{0}^{n}K_{0} & \text{if }\,A(\lambda)\in\frakp^{2}
\end{cases}.
\]
Hence, we have $\tau(R_{n}^{-}(\sigma_{\vec{r}}))\subseteq R_{n-1}^{-}(\sigma_{\vec{r}})\oplus R_{n}^{-}(\sigma_{\vec{r}})\oplus R_{n+1}^{-}(\sigma_{\vec{r}})$,
for $n\geq0.$
\end{proof}
\begin{lem}
\label{lem:tau injective on C_0} --- The map $\tau:C_{0,\sig}\to C_{1,\sig}$
is injective.
\end{lem}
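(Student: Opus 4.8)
The goal is to show $\tau: C_{0,\sig} \to C_{1,\sig}$ is injective, where $C_{0,\sig} = R_0^+(\sig) = [K_0, \sig]$ consists of functions supported on the central vertex. Since $C_{0,\sig} \cong \sig$ as a vector space via $[1,v] \mapsto v$, and since $\tau$ is $G_S$-equivariant (in particular $K_0$-equivariant), the map $\tau|_{C_{0,\sig}}$ corresponds to a $K_0$-equivariant endomorphism. Because $\sig = \sigma_{\vec r}$ is irreducible and $K_1$-components of $C_{1,\sig}$ are either trivial or give back copies of $\sig^\alpha$, I would analyze $\tau$ restricted to the pro-$p$-Iwahori invariants first, then bootstrap to all of $C_{0,\sig}$ using the generation statement in Proposition \ref{prop:spanning set for weights}.

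\textbf{Step 1: Compute $\tau(f_0)$ explicitly.} By Remark \ref{rem : I_S(1) invariants of the sphere R_n(sigma)}, the line $C_{0,\sig}^{I_S(1)}$ is spanned by $f_0 = [\alpha_0^0, v_{\sig}] = [1, v_{\sig}]$ (since $n=0$ falls in the $n \le 0$ case, with value $v_{\sig} = X^{\vec r}$). I would apply the boxed formula \ref{eq:action of tau on standard function} with $g = 1$, $v = v_{\sig}$. The first sum ranges over $\lambda \in k_F^2$ and produces terms $[u(A(\lambda))\alpha_0^{-1}, w_0 U_{\vec r}\, \sigma_{\vec r}(\cdots) v_{\sig}]$; the second sum over $\mu \in k_F$ produces $[\bar u(\varpi_F A(\mu))\alpha_0, U_{\vec r} v_{\sig}]$. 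Now $U_{\vec r}(X^{\vec r}) = U_{\vec r}(X^{r_0} \otimes \cdots) = Y^{r_0} \otimes \cdots = Y^{\vec r}$ by \ref{eq:formula for U_r} (each $l=0$ case). So the second sum is $\sum_{\mu} [\bar u(\varpi_F A(\mu))\alpha_0, Y^{\vec r}]$, which lands in $R_0^-(\sig)$ and, being a sum over all of $k_F$ of functions permuted by $I_S(1)$, is exactly (up to the identification) the invariant function $f_1 \in R_0^-(\sig)^{I_S(1)}$. The point is that this contribution is \emph{nonzero}: its value at $\alpha_0^{-1}$ (or the appropriate coset representative) is a nonzero multiple of $Y^{\vec r} \ne 0$.

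\textbf{Step 2: Nonvanishing of $\tau$ on the whole of $C_{0,\sig}$.} The upshot of Step 1 is $\tau(f_0) \ne 0$; in fact the component of $\tau(f_0)$ in $R_0^-(\sig)$ equals $f_1$ up to scalar. Since $f_0$ spans the unique $I_S(1)$-invariant line (Proposition \ref{prop:spanning set for weights}) and $\ker(\tau|_{C_{0,\sig}})$ is a $K_0$-subrepresentation of $C_{0,\sig} \cong \sig$, irreducibility of $\sig$ forces $\ker(\tau|_{C_{0,\sig}})$ to be either $0$ or all of $C_{0,\sig}$. As $f_0 \notin \ker(\tau|_{C_{0,\sig}})$, the kernel is not everything, hence it is $0$ — exactly the desired injectivity. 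This is the clean argument, and it is where the hypothesis that $\sig$ is irreducible (automatic for weights of $\SL_2(\calO)$) is essential.

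\textbf{Main obstacle.} The delicate point is verifying that the $R_0^-(\sig)$-component of $\tau(f_0)$ really is nonzero — i.e., that after projecting the full expression \ref{eq:action of tau on standard function} onto the summand $R_0^-(\sig)$, no cancellation occurs. In principle the first sum (over $\lambda \in k_F^2$) contributes to $R_1^+(\sig)$ and $R_0^-(\sig)$ (via the reductions in the proof of Lemma \ref{lem: action of tau on 2n circles}(i)), so I must make sure the $R_0^-$-contributions from the first sum do not cancel the $\sum_\mu [\bar u(\varpi_F A(\mu))\alpha_0, Y^{\vec r}]$ term — or, more carefully, track that the net $R_0^-$-component is a nonzero multiple of $f_1$. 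One must also keep careful track of the operator $U_{\vec r}$: since $U_{\vec r}(X^{\vec r}) = Y^{\vec r} \neq 0$ but $U_{\vec r}$ kills every other basis monomial, the computation hinges precisely on the generating vector $v_{\sig} = X^{\vec r}$ surviving $U_{\vec r}$. Handling the explicit $\SL_2(\ffq)$-action on $\sigma_{\vec r}$ and the Teichmüller-lift bookkeeping in $A(\lambda), A(\mu)$ is the routine-but-careful part; once $\tau(f_0) \ne 0$ is secured, the representation-theoretic reduction in Step 2 finishes the proof immediately.
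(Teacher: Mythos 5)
Your Step 2 is sound and is genuinely different from the paper's argument: since $C_{0,\sigma_{\vec{r}}}\cong\sigma_{\vec{r}}$ as a $K_{0}$-representation and $\tau$ is $G_{S}$-equivariant, $\ker(\tau|_{C_{0,\sigma_{\vec{r}}}})$ is a $K_{0}$-subrepresentation, so irreducibility of the weight reduces everything to the single non-vanishing $\tau(f_{0})\neq0$. (The paper does not use irreducibility at all; it proves the stronger pointwise statement that $\tau([1,v])\neq0$ for \emph{every} non-zero $v$, by writing $v=\sum_{i}c_{i}X^{r-i}Y^{i}$ and observing that the $X^{r}$-coefficient of $u(a)\cdot v$ is a non-zero polynomial $P_{v}(a)$ of degree $r<q$, hence non-vanishing at some $a\in k_{F}$; this is exactly the explicit analysis the paper advertises as one of its contributions.)

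The genuine problem is your Step 1, the only computational input of your proof, and it is wrong as written. By eq. \ref{eq:formula for U_r}, $U_{r_{j}}$ fixes $Y^{r_{j}}$ (the monomial $X^{l}Y^{r_{j}-l}$ with $l=0$) and kills all other monomials; in particular $U_{\vec{r}}(X^{\vec{r}})=0$ whenever $\vec{r}\neq\vec{0}$, contrary to your claim that $U_{\vec{r}}(X^{\vec{r}})=Y^{\vec{r}}$ ``each $l=0$ case''. Consequently the $\mu$-sum in \ref{eq:action of tau on standard function} applied to $f_{0}=[1,X^{\vec{r}}]$ vanishes identically for every non-trivial weight, so the $R_{0}^{-}(\sigma_{\vec{r}})$-component of $\tau(f_{0})$ is \emph{zero} rather than a non-zero multiple of $f_{1}$ --- this is precisely the statement $\lambda_{\sigma_{\vec{r}}}=0$ for $\vec{r}\neq\vec{0}$ in Lemma \ref{lem:tau(f_n)}(i), and your stated ``main obstacle'' (possible cancellation in $R_{0}^{-}$) is therefore aimed at the wrong term. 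The non-vanishing you need must come from the $\lambda$-sum: there $\sigma_{\vec{r}}\bigl(\begin{smallmatrix}0&1\\-1&A(\lambda)\end{smallmatrix}\bigr)X^{\vec{r}}=(-1)^{r}Y^{\vec{r}}$, which \emph{does} survive $U_{\vec{r}}$, giving $\tau(f_{0})=f_{-1}+\lambda_{\sigma_{\vec{r}}}f_{1}\neq0$ with the non-zero contribution landing in $R_{1}^{+}(\sigma_{\vec{r}})$, not $R_{0}^{-}(\sigma_{\vec{r}})$. With Step 1 corrected in this way (and noting the terms of the two sums are supported on distinct cosets, so no cancellation is possible), your irreducibility reduction does yield a valid, shorter proof of the lemma; as written, however, the key computation fails.
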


\begin{proof}
Note that $\bar{U}_{S}(\calO)$-translates of $[I,X^{\vec{r}}]$ generates
the space $C_{0,\sig}$, by Proposition \ref{prop:spanning set for weights}.
Since the right hand side of the formula \ref{eq:action of tau on standard function}
has standard functions supported on distinct cosets, it suffices to
show that for any non-zero $v\in\sig$ there exists some $\lambda\in k_{F}$
such that 
\[
U_{\vec{r}}\left(\sig\left(\left(\begin{array}{cc}
0 & 1\\
-1 & A(\lambda)
\end{array}\right)\right)v\right)\neq0.
\]
As in the proof of Proposition \ref{prop:spanning set for weights}
we consider $\Sym^{\vec{r}}\ffpbar^{2}$ as an $\SL_{2}(k_{F})$-subrepresentation
of $\Sym^{r}\ffpbar^{2}$ via the map $v_{0}\otimes v_{1}\otimes\cdots\otimes v_{n-1}\mapsto v_{0}v_{1}^{p}\cdots v_{n-1}^{p^{n-1}}$
and work with the monomial basis 
\[
\left\{ X^{r-i}Y^{i}\,:\,i=\sum_{j=0}^{n-1}i_{j}p^{j}\leq r=\sum_{j=0}^{n-1}r_{j}p^{j},\,i_{j},r_{j}\in\{0,\dots,p-1\},\,i_{j}\leq r_{j}\text{ for every }j\right\} .
\]
Now we write $v=\sum_{i}c_{i}X^{r-i}Y^{i}$ as a linear combination
with the admissible monomials and coefficients $c_{i}\in\ffpbar$
not all zero. For any $a\in k_{F}$ the coefficient of $X^{r}$ in
$u(a)\cdot v$ is $P_{v}(a)=\sum_{i}c_{i}a^{i}$ which is a polynomial
(in $a$) of degree $r<q$ with coefficients $c_{i}\in\ffpbar$ not
all zero. Thus there exists some $a\in k_{F}$ so that $P_{v}(a)\neq0$.
We let $A(\lambda):=[a]$ where $[\cdot]$ denotes the Teichmuller
lift as usual. Therefore, the coefficient of $X^{r}$ in $u(A(\lambda))\cdot v$
is $P_{v}(a)$ which is non-zero. Hence, the $Y^{r}$ coefficient
of 
\[
\left(\begin{array}{cc}
0 & 1\\
-1 & A(\lambda)
\end{array}\right)\cdot v=w_{0}u(A(\lambda))\cdot v
\]
 is non-zero. So $U_{\vec{r}}\left(\sig\left(\left(\begin{array}{cc}
0 & 1\\
-1 & A(\lambda)
\end{array}\right)\right)v\right)\neq0$ by eq. \ref{eq:formula for U_r} as required.
\end{proof}

\subsection{Action of $\tau$ strictly increases the ${\rm top}$ value}

Next, we will show that $\tau$ satisfies the condition (C4). But
we first need to explicitly compute the action of $\tau$ on $\ind_{K_{0}}^{G_{S}}(\sigma_{\vec{r}})^{I_{S}(1)}$
using the explicit formulae \ref{eq:action of tau on standard function}
and \ref{eq:decomposition of f_n}.
\begin{lem}
\label{lem:tau(f_n)} --- We have
\begin{enumerate}
\item[(i)]  $\tau(f_{0})=f_{-1}+\lambda_{\sigma_{\vec{r}}}f_{1}$, where $\lambda_{\sigma_{\vec{r}}}=\begin{cases}
0 & \text{if }\,\vec{r}\neq\vec{0}\\
1 & \text{if }\,\vec{r}=\vec{0}
\end{cases}.$
\item[(ii)]  For $n\neq0$ we have $\tau(f_{n})=c_{n}f_{n}+f_{n+\delta(n)}$,
where $c_{n}$ is a scalar and $\delta(n)=\begin{cases}
1 & \text{if }\,n>0\\
-1 & \text{if }\,n<0
\end{cases}.$
\end{enumerate}
\end{lem}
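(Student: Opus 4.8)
The plan is to compute $\tau(f_n)$ directly using the explicit formula \eqref{eq:action of tau on standard function} for the Hecke operator applied to a standard function, combined with the decomposition \eqref{eq:decomposition of f_n} of $f_n$ as a sum of standard functions. Since $\tau(f_n)$ is again $I_S(1)$-invariant (as $\tau$ commutes with the $G_S$-action and $f_n$ is $I_S(1)$-fixed), and by Lemma \ref{lem: action of tau on 2n circles} its support lies on vertices at distance $2|n|\pm 2$ or $2|n|$ from the central vertex, Remark \ref{rem : I_S(1) invariants of the sphere R_n(sigma)} tells us that $\tau(f_n)$ must be a linear combination of at most three of the basis vectors $f_m$ — namely $f_{n-\delta(n)}$, $f_n$, and $f_{n+\delta(n)}$ (using the sign conventions in the statement). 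So the entire content is to pin down these (at most three) scalar coefficients; because each $f_m$ is determined by its value at $\alpha_0^{-m}$, it suffices to evaluate $\tau(f_n)$ at a single well-chosen point on each relevant sphere.

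For part (i), I would take $f_0 = [I, v_{\sigma_{\vec r}}]$, apply \eqref{eq:action of tau on standard function} with $g = I$, and sort the resulting standard functions by which double coset $K_0 \alpha_0^{\pm 1} I_S(1)$ they land in. The $\mu$-sum terms $[\bar u(\varpi_F A(\mu))\alpha_0, U_{\vec r} v_{\sigma_{\vec r}}]$ all lie in $R_0^-(\sigma_{\vec r})$ and assemble (after the reduction $\bar u(\varpi_F A(\mu))\alpha_0 \in \alpha_0 K_0$ up to $I_S(1)$, as in the proof of Lemma \ref{lem: action of tau on 2n circles}(i)) into a multiple of $f_{-1}$; here $U_{\vec r} v_{\sigma_{\vec r}} = U_{\vec r} X^{\vec r} = Y^{\vec r}$ is nonzero, giving coefficient $1$ on $f_{-1}$ regardless of $\vec r$. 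The $\lambda$-sum terms $[u(A(\lambda))\alpha_0^{-1}, w_0 U_{\vec r}\sigma_{\vec r}(\cdots) v_{\sigma_{\vec r}}]$ land in $R_1^+(\sigma_{\vec r})$ and assemble into a multiple of $f_1$; the coefficient is governed by $\sum_{\lambda \in k_F^2} w_0 U_{\vec r}\, \sigma_{\vec r}\!\left(\begin{smallmatrix} 0 & 1 \\ -1 & A(\lambda)\end{smallmatrix}\right) X^{\vec r}$, and the key computation is that $\sigma_{\vec r}\!\left(\begin{smallmatrix} 0 & 1 \\ -1 & A(\lambda)\end{smallmatrix}\right)X^{\vec r} = (A(\lambda) X - Y)^{\vec r}$ so that $U_{\vec r}$ of it picks out the $X^{r_j}$-coefficient in each factor, namely $A(\lambda)^{r_j}$; when $\vec r \ne \vec 0$ these Teichmüller powers sum to $0$ over $k_F^2$ by a standard character-sum / power-sum vanishing argument (each nontrivial power of elements of $k_F$ sums to zero), while for $\vec r = \vec 0$ every term contributes $1$ and one gets coefficient $1$. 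This yields $\lambda_{\sigma_{\vec r}}$ as stated.

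For part (ii), take $n > 0$ first (the case $n < 0$ is symmetric via the $w_0$-conjugation / the $\alpha$-twist symmetry relating $K_0$ and $K_1$, or by an entirely parallel direct computation). Using the decomposition $f_n = \sum_{\bar u} [\bar u \alpha_0^n, w_0 \cdot v_{\sigma_{\vec r}}]$ and \eqref{eq:action of tau on standard function}, I would apply the coset computations from the proof of Lemma \ref{lem: action of tau on 2n circles}(iii): the $\mu$-sum contributions push everything out to distance $2n+2$, contributing to $f_{n+1}$, while the $\lambda$-sum contributions split according to whether $A(\lambda) \in \mathcal O^\times$, $\frakp \setminus \frakp^2$, or $\frakp^2$, landing respectively on the $f_{n+1}$, $f_n$, and $f_{n-1}$ spheres. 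The crucial point — and this is the main obstacle — is to show that the net coefficient of $f_{n-1}$ vanishes. Evaluating $\tau(f_n)$ at $\alpha_0^{-(n-1)}$, the only surviving terms are those $\lambda$ with $A(\lambda) \in \frakp^2$, i.e. $\lambda = (0,0)$, giving a single term whose value involves $w_0 U_{\vec r}\, \sigma_{\vec r}\!\left(\begin{smallmatrix} 0 & 1 \\ -1 & 0\end{smallmatrix}\right)(w_0 \cdot v_{\sigma_{\vec r}})$; since $\left(\begin{smallmatrix} 0 & 1 \\ -1 & 0\end{smallmatrix}\right) = w_0^{-1}$ up to sign and $w_0 \cdot v_{\sigma_{\vec r}} = w_0 \cdot X^{\vec r}$ is (a scalar times) $Y^{\vec r}$, one computes $\sigma_{\vec r}(w_0^{-1})(Y^{\vec r})$ is proportional to $X^{\vec r}$, and then $U_{\vec r}$ applied to $X^{\vec r}$ — wait, $U_{\vec r}(X^{\vec r}) = Y^{\vec r} \ne 0$; so one must track instead that after applying $w_0$ on the left the relevant vector lies in the kernel of the projection to $R_{n-1}^-$, equivalently that the coefficient assembled over all contributing cosets telescopes to zero. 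I would carry this out by writing the full $\lambda$-sum over $A(\lambda) \in \frakp^2$ and showing the image, after the Iwasawa-type reduction $\alpha_0^n u(A(\lambda)) \alpha_0^{-1} \alpha_0^{-?}$, collapses into $\bar U_S(\frakp^{2(n-1)})$-orbits whose contributions cancel because the relevant $U_{\vec r}$-eigenvector computation produces a vanishing sum exactly as in part (i) — I expect this cancellation (rather than a clean eigenvector argument) to require the most care, and I would double-check it against \cite[Propositions 4.9 and 4.10]{Das24} for consistency. Once the $f_{n-1}$-coefficient is shown to vanish, naming the remaining $f_n$-coefficient $c_n$ and normalizing the $f_{n+1}$-coefficient to $1$ (after checking it is nonzero, which follows since $\tau$ is injective on the relevant sphere by the argument underlying Lemma \ref{lem:tau injective on C_0}) completes the proof.
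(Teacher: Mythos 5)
Your overall skeleton (use $I_{S}(1)$-invariance and Lemma \ref{lem: action of tau on 2n circles} to reduce $\tau(f_{n})$ to three unknown coefficients, then pin them down from formula \ref{eq:action of tau on standard function} by coset analysis and evaluation at single points) is the same as the paper's, but the execution of part (i) contains concrete errors. By equation \ref{eq:formula for U_r}, $U_{\vec{r}}$ is the projection onto the line $\ffpbar\cdot Y^{\vec{r}}$, so $U_{\vec{r}}(X^{\vec{r}})=0$ whenever $\vec{r}\neq\vec{0}$; your claim that $U_{\vec{r}}X^{\vec{r}}=Y^{\vec{r}}\neq0$ is false, and this vanishing is exactly where $\lambda_{\sig}$ comes from. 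You have also swapped the two spheres: by \ref{eq:decomposition of f_n} one has $f_{-1}=\sum_{u}[u\alpha_{0}^{-1},v_{\sig}]\in R_{1}^{+}(\sig)$ and $f_{1}=\sum_{\bar{u}}[\bar{u}\alpha_{0},w_{0}\cdot v_{\sig}]\in R_{0}^{-}(\sig)$, so the $\lambda$-sum (supported on $K_{0}\alpha_{0}U_{S}(\calO)$) feeds $f_{-1}$ and the $\mu$-sum (supported on $K_{0}\alpha_{0}^{-1}\bar{U}_{S}(\frakp)$) feeds $f_{1}$ --- the opposite of what you wrote. Consequently your proposed mechanism for $\lambda_{\sig}$ (a power-sum cancellation of $A(\lambda)^{r_{j}}$ over $\lambda\in k_{F}^{2}$) cannot work: the $q^{2}$ terms of the $\lambda$-sum are supported on pairwise distinct cosets $K_{0}\alpha_{0}u(-A(\lambda))$, so no cancellation among them is possible (and if such scalar summation were legitimate, your own argument would give $q^{2}=0$ rather than $1$ in the case $\vec{r}=\vec{0}$, contradicting the statement). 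Under the action convention of Section 2.3 one has $\sigma_{\vec{r}}\bigl(\begin{smallmatrix}0 & 1\\ -1 & A(\lambda)\end{smallmatrix}\bigr)X^{\vec{r}}=(-1)^{r}Y^{\vec{r}}$, independent of $\lambda$, so the $\lambda$-sum assembles exactly into $f_{-1}$ with coefficient $1$, while the $\mu$-sum carries the value $U_{\vec{r}}X^{\vec{r}}$, which vanishes term by term for $\vec{r}\neq\vec{0}$ and yields $f_{1}$ only for $\vec{r}=\vec{0}$.

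In part (ii) the decisive point --- the vanishing of the coefficient of $f_{n-\delta(n)}$ --- is not proved: you explicitly leave it as an expected cancellation to be checked against \cite{Das24}, and the obstruction you run into (``$U_{\vec{r}}(X^{\vec{r}})=Y^{\vec{r}}\neq0$'') is an artifact of the same misreading of \ref{eq:formula for U_r}. The paper handles this cleanly by writing $f_{n}$ as a sum of translates of $f_{0}$, substituting part (i), and evaluating at the inner point $\alpha_{0}^{\pm(|n|-1)}$: after the double-coset computation only the terms coming from $\lambda_{\sig}f_{\pm1}$ survive, and their total is $\lambda_{\sig}\sum_{(\mu_{0},\mu_{1})\in k_{F}^{2}}u([\mu_{0}])\cdot(w_{0}\cdot v_{\sig})$ (or its $w_{0}$-conjugate), which vanishes either because $\lambda_{\sig}=0$ or, in the trivial case, because each value occurs with multiplicity $q\equiv0$ in characteristic $p$. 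Moreover, the lemma asserts the outward coefficient equals $1$, not merely that it is nonzero; you cannot ``normalize'' it, and injectivity of $\tau$ on $C_{0,\sig}$ (Lemma \ref{lem:tau injective on C_0}) says nothing about the higher spheres. The paper obtains the value $1$ by evaluating at the outer point, where exactly one term survives. As written, therefore, both the computation of $\lambda_{\sig}$ in (i) and the determination of the two extreme coefficients in (ii) need to be redone.
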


\begin{proof}
(i) At first, we consider the case when $\sigma_{\vec{r}}\neq1$.
Then $f_{0}=[I_{2},X^{\vec{r}}]$, and we have :
\begin{align*}
\tau(f_{0})= & \tau([I_{2},X^{\vec{r}}])\\
= & \sum_{\lambda\in k_{F}^{2}}\Bigg[\begin{pmatrix}\begin{array}{cc}
1 & A(\lambda)\\
0 & 1
\end{array}\end{pmatrix}\alpha_{0}^{-1},w_{0}U_{\vec{r}}\sigma_{\vec{r}}\bigg(\begin{pmatrix}\begin{array}{cc}
0 & 1\\
-1 & A(\lambda)
\end{array}\end{pmatrix}\bigg)(X^{\vec{r}})\Bigg]+\sum_{\mu\in k_{F}}\Bigg[\begin{pmatrix}\begin{array}{cc}
1 & 0\\
\varpi_{F}A(\mu) & 1
\end{array}\end{pmatrix}\alpha_{0},U_{\vec{r}}X^{\vec{r}}\Bigg]\\
= & \sum_{\lambda\in k_{F}^{2}}\Bigg[\begin{pmatrix}\begin{array}{cc}
1 & A(\lambda)\\
0 & 1
\end{array}\end{pmatrix}\alpha_{0}^{-1},w_{0}U_{\vec{r}}((-1)^{r}Y^{\vec{r}})\Bigg]\\
= & \sum_{\lambda\in k_{F}^{2}}\Bigg[\begin{pmatrix}\begin{array}{cc}
1 & A(\lambda)\\
0 & 1
\end{array}\end{pmatrix}\alpha_{0}^{-1},(-1)^{r}w_{0}\cdot Y^{\vec{r}}\Bigg]\\
= & \sum_{\lambda\in k_{F}^{2}}\Bigg[\begin{pmatrix}\begin{array}{cc}
1 & A(\lambda)\\
0 & 1
\end{array}\end{pmatrix}\alpha_{0}^{-1},(-1)^{2r}X^{\vec{r}}\Bigg]\\
= & f_{-1}.
\end{align*}
Next, let $\sigma_{\vec{r}}=1$. Then, we have :
\begin{align*}
\tau(f_{0})= & \tau([I_{2},1])\\
= & \sum_{\lambda\in k_{F}^{2}}\begin{pmatrix}\begin{array}{cc}
1 & A(\lambda)\\
0 & 1
\end{array}\end{pmatrix}\alpha_{0}^{-1}\cdot[I_{2},1]+\sum_{\mu\in k_{F}}\begin{pmatrix}\begin{array}{cc}
1 & 0\\
\varpi_{F}A(\mu) & 1
\end{array}\end{pmatrix}\alpha_{0}\cdot[I_{2},1]\\
= & \sum_{u\in U_{S}(\calO)/U_{S}(\frakp^{2})}[u\alpha_{0}^{-1},1]+\sum_{\bar{u}\in\bar{U}_{S}(\frakp)/\bar{U}_{S}(\frakp^{2})}[\bar{u}\alpha_{0},1]\\
= & f_{1}+f_{-1}.
\end{align*}

(ii) At first, we take $n=-m$ to be a negative integer. We will show
that 
\[
\tau(f_{n})=c_{n}f_{n}+f_{n-1}
\]
for some $c_{n}\in\ffpbar$. Now, $f_{-m}$ is $I_{S}(1)$-invariant
and as $\tau$ is a $G_{S}$-intertwiner, $\tau(f_{-m})$ is also
$I_{S}(1)$-invariant. Since each $R_{j}^{+}(\sigma_{\vec{r}})$ for
$j\geq0$ is $I_{S}(1)$-stable we have 
\[
\tau(f_{-m})\in(R_{m-1}^{+}(\sigma_{\vec{r}})\oplus R_{m}^{+}(\sigma_{\vec{r}})\oplus R_{m+1}^{+}(\sigma_{\vec{r}}))^{I_{S}(1)}=R_{m-1}^{+}(\sigma_{\vec{r}})^{I_{S}(1)}\oplus R_{m}^{+}(\sigma_{\vec{r}})^{I_{S}(1)}\oplus R_{m+1}^{+}(\sigma_{\vec{r}})^{I_{S}(1)},
\]
and so we can write 
\[
\tau(f_{-m})=c_{m-1}f_{-m+1}+c_{m}f_{-m}+c_{m+1}f_{-m-1}
\]
 for some $c_{m-1},c_{m},c_{m+1}\in\ffpbar$. Now, by the formula
\ref{eq:decomposition of f_n} and part (1), we have 
\[
\tau(f_{-m})=\tau\bigg(\sum_{u\in U_{S}(\calO)/U_{S}(\frakp^{2m})}u\alpha_{0}^{-m}\cdot f_{0}\bigg)=\sum_{u\in U_{S}(\calO)/U_{S}(\frakp^{2m})}u\alpha_{0}^{-m}(f_{-1}+\lambda_{\vec{r}}f_{1}).
\]
So, to find the constants $c_{m-1}$ and $c_{m+1}$ we evaluate $\tau(f_{-m})$
above at $\alpha_{0}^{m-1}$ and $\alpha_{0}^{m+1}$. For this we
at first need to find which $K_{0}$-$I_{S}(1)$ double coset the
elements $\alpha_{0}^{m-1}u\alpha_{0}^{-m}$ and $\alpha_{0}^{m+1}u\alpha_{0}^{-m}$
lie in, for $u\in U_{S}(\calO)/U_{S}(\frakp^{2m})$. Let $u$ be an
upper unipotent with the top right entry $x\in\calO/\frakp^{2m}$,
and we have 
\[
\alpha_{0}^{m-1}\begin{pmatrix}\begin{array}{cc}
1 & x\\
0 & 1
\end{array}\end{pmatrix}\alpha_{0}^{-m}=\begin{pmatrix}\begin{array}{cc}
\varpi_{F} & x\varpi_{F}^{1-2m}\\
0 & \varpi_{F}^{-1}
\end{array}\end{pmatrix}\in\begin{cases}
K_{0}\alpha_{0}^{-1}I_{S}(1) & \text{if }\,x\in\frakp^{2m-2}\\
K_{0}\alpha_{0}^{l(x)}I_{S}(1) & \text{if }\,x\in\calO\setminus\frakp^{2m-2}
\end{cases},
\]
where $l(x)<-1;$ the above containments can be seen by elementary
row and column reduction. For instance if $x=a\varpi_{F}^{2m-2}$
with $a\in\calO$ then : 
\[
\begin{pmatrix}\begin{array}{cc}
1 & -a\\
0 & 1
\end{array}\end{pmatrix}\begin{pmatrix}\begin{array}{cc}
\varpi_{F} & a\varpi_{F}^{-1}\\
0 & \varpi_{F}^{-1}
\end{array}\end{pmatrix}=\alpha_{0}^{-1},
\]
and if $x=a\varpi_{F}^{2m-d}\in\calO$ with $d\geq3$ and $a\in\calO^{\times}$
then : 
\begin{align*}
\begin{pmatrix}\begin{array}{cc}
a & 0\\
0 & a^{-1}
\end{array}\end{pmatrix}\begin{pmatrix}\begin{array}{cc}
0 & -1\\
1 & 0
\end{array}\end{pmatrix}\begin{pmatrix}\begin{array}{cc}
1 & 0\\
-a^{-1}\varpi_{F}^{d-2} & 1
\end{array}\end{pmatrix}\begin{pmatrix}\begin{array}{cc}
\varpi_{F} & a\varpi_{F}^{1-d}\\
0 & \varpi_{F}^{-1}
\end{array}\end{pmatrix}\begin{pmatrix}\begin{array}{cc}
1 & 0\\
-a^{-1}\varpi_{F}^{d} & 1
\end{array}\end{pmatrix} & =\alpha_{0}^{1-d}.
\end{align*}
Then, we have 
\begin{align*}
\tau(f_{-m})(\alpha_{0}^{m-1})= & \sum_{u\in U_{S}(\calO)/U_{S}(\frakp^{2m})}f_{-1}(\alpha_{0}^{m-1}u\alpha_{0}^{-m})+\lambda_{\vec{r}}\sum_{u\in U_{S}(\calO)/U_{S}(\frakp^{2m})}f_{1}(\alpha_{0}^{m-1}u\alpha_{0}^{-m})\\
= & \lambda_{\vec{r}}\sum_{u\in U_{S}(\frakp^{2m-2})/U_{S}(\frakp^{2m})}f_{1}\left(\underset{{\rm write\,as\,}u_{1}\alpha_{0}^{-1}\,{\rm with\,}u_{1}\in U_{S}(\calO)/U_{S}(\frakp^{2})}{\underbrace{\alpha_{0}^{m-1}u\alpha_{0}^{-m}}}\right)\\
= & \lambda_{\vec{r}}\sum_{u_{1}\in U_{S}(\calO)/U_{S}(\frakp^{2})}f_{1}(u_{1}\alpha_{0}^{-1})\\
= & \lambda_{\vec{r}}\sum_{\mu\in k_{F}^{2}}\begin{pmatrix}\begin{array}{cc}
1 & [\mu_{0}]+[\mu_{1}]\varpi_{F}\\
0 & 1
\end{array}\end{pmatrix}\cdot f_{1}(\alpha_{0}^{-1})\\
= & \lambda_{\vec{r}}\sum_{\mu\in k_{F}^{2}}\begin{pmatrix}\begin{array}{cc}
1 & [\mu_{0}]+[\mu_{1}]\varpi_{F}\\
0 & 1
\end{array}\end{pmatrix}\cdot(w_{0}\cdot v_{\sigma_{\vec{r}}})\\
= & \lambda_{\vec{r}}\sum_{(\mu_{0},\mu_{1})\in k_{F}\times k_{F}}\begin{pmatrix}\begin{array}{cc}
1 & [\mu_{0}]\\
0 & 1
\end{array}\end{pmatrix}\cdot(w_{0}\cdot v_{\sigma_{\vec{r}}})\\
= & 0.
\end{align*}
Hence, $c_{m-1}=0$. Next, we take $u$ to be an upper unipotent with
the top right entry $x\in\calO/\frakp^{2m}$, and we have 
\[
\alpha_{0}^{m+1}\begin{pmatrix}\begin{array}{cc}
1 & x\\
0 & 1
\end{array}\end{pmatrix}\alpha_{0}^{-m}=\begin{pmatrix}\begin{array}{cc}
\varpi_{F}^{-1} & x\varpi_{F}^{-2m-1}\\
0 & \varpi_{F}
\end{array}\end{pmatrix}\in\begin{cases}
K_{0}\alpha_{0}I_{S}(1) & \text{if }\,x\in\frakp^{2m}\\
K_{0}\alpha_{0}^{l^{\prime}(x)}I_{S}(1) & \text{if }\,x\in\calO\setminus\frakp^{2m}
\end{cases},
\]
where $l^{\prime}(x)<-1.$ The above containments can be proved as
before. Therefore, we have : 
\begin{align*}
\tau(f_{-m})(\alpha_{0}^{m+1})= & \sum_{u\in U_{S}(\calO)/U_{S}(\frakp^{2m})}f_{-1}(\alpha_{0}^{m+1}u\alpha_{0}^{-m})+\lambda_{\vec{r}}\sum_{u\in U_{S}(\calO)/U_{S}(\frakp^{2m})}f_{1}(\alpha_{0}^{m+1}u\alpha_{0}^{-m})\\
= & f_{-1}(\alpha_{0})=v_{\sigma_{\vec{r}}}.
\end{align*}
Hence, $c_{m+1}=1.$ 

Next, we consider the case when $n=m+1$ is positive i.e. $m\geq0$.
Then, $f_{n}=f_{m+1}\in R_{m}^{-}(\sigma_{\vec{r}})$, hence
\[
\tau(f_{m+1})=c_{m-1}f_{m}+c_{m+1}f_{m+1}+c_{m+2}f_{m+2},
\]
and we want to show that $c_{m-1}=0$ and $c_{m+2}=1.$ Now, by the
formula \ref{eq:decomposition of f_n} and part (1), we get 
\begin{align*}
\tau(f_{m+1})= & \tau\Big(\sum_{\bar{u}\in\bar{U}_{S}(\frakp)/\bar{U}_{S}(\frakp^{2m+2})}[\bar{u}\alpha_{0}^{m+1},w_{0}\cdot v_{\sigma_{\vec{r}}}]\Big)=\sum_{\bar{u}\in\bar{U}_{S}(\frakp)/\bar{U}_{S}(\frakp^{2m+2})}\bar{u}\alpha_{0}^{m+1}w_{0}\tau(f_{0})\\
= & \sum_{\bar{u}\in\bar{U}_{S}(\frakp)/\bar{U}_{S}(\frakp^{2m+2})}\bar{u}\alpha_{0}^{m+1}w_{0}(f_{-1}+\lambda_{\sigma_{\vec{r}}}f_{1})
\end{align*}
So, to find the constants $c_{m-1}$ and $c_{m+2}$ we evaluate $\tau(f_{m+1})$
at $\alpha_{0}^{-m}$ and $\alpha_{0}^{-m-2}$ respectively. As before,
we at first find which $K_{0}$-$I_{S}(1)$ double coset the elements
$\alpha_{0}^{m}u\alpha_{0}^{-m-1}$ and $\alpha_{0}^{m+2}u\alpha_{0}^{-m-1}$
lie in, for $u\in U_{S}(\frakp)/U_{S}(\frakp^{2m+2}).$ Let $u$ be
an upper unipotent with the top right entry $x\in\frakp/\frakp^{2m+2}$.
Then, we have 
\[
\alpha_{0}^{m}\begin{pmatrix}\begin{array}{cc}
1 & x\\
0 & 1
\end{array}\end{pmatrix}\alpha_{0}^{-m-1}=\begin{pmatrix}\begin{array}{cc}
\varpi_{F} & x\varpi_{F}^{-2m-1}\\
0 & \varpi_{F}^{-1}
\end{array}\end{pmatrix}\in\begin{cases}
K_{0}\alpha_{0}^{-1}I_{S}(1) & \text{if }\,x\in\frakp^{2m}\\
K_{0}\alpha_{0}^{l(x)}I_{S}(1) & \text{if }\,x\in\frakp\setminus\frakp^{2m}
\end{cases},
\]
where $l(x)<-1$. As a result we have 
\begin{align*}
\tau(f_{m+1})(\alpha_{0}^{-m})= & \sum_{\bar{u}\in\bar{U}_{S}(\frakp)/\bar{U}_{S}(\frakp^{2m+2})}f_{-1}(\alpha_{0}^{-m}\bar{u}\alpha_{0}^{m+1}w_{0})+\lambda_{\vec{r}}\sum_{\bar{u}\in\bar{U}_{S}(\frakp)/\bar{U}_{S}(\frakp^{2m+2})}f_{1}(\alpha_{0}^{-m}\bar{u}\alpha_{0}^{m+1}w_{0})\\
= & \lambda_{\vec{r}}\sum_{u\in U_{S}(\frakp^{2m})/U_{S}(\frakp^{2m+2})}w_{0}\cdot f_{1}\left(\underset{{\rm write\,}{\rm as\,}u_{1}\alpha_{0}^{-1}\,{\rm with\,}u_{1}\in U_{S}(\calO)/U_{S}(\frakp^{2})}{\underbrace{\alpha_{0}^{m}u\alpha_{0}^{-m-1}}}\right)\\
= & \lambda_{\vec{r}}\sum_{u_{1}\in U_{S}(\calO)/U_{S}(\frakp^{2})}(w_{0}u_{1}w_{0})\cdot v_{\sigma_{\vec{r}}}\\
= & \lambda_{\vec{r}}\sum_{(\mu_{0},\mu_{1})\in k_{F}^{2}}(w_{0}\begin{pmatrix}\begin{array}{cc}
1 & [\mu_{0}]\\
0 & 1
\end{array}\end{pmatrix}w_{0})\cdot v_{\sigma_{\vec{r}}}=0,
\end{align*}
and hence $c_{m-1}=0.$ Next, for $u$ an upper unipotent with the
top right entry $x\in\frakp/\frakp^{2m+2}$, we have 
\[
\alpha_{0}^{m+2}\begin{pmatrix}\begin{array}{cc}
1 & x\\
0 & 1
\end{array}\end{pmatrix}\alpha_{0}^{-m-1}=\begin{pmatrix}\begin{array}{cc}
\varpi_{F}^{-1} & x\varpi_{F}^{-2m-3}\\
0 & \varpi_{F}
\end{array}\end{pmatrix}\in\begin{cases}
K_{0}\alpha_{0}I_{S}(1) & \text{if }\,x\in\frakp^{2m+2}\\
K_{0}\alpha_{0}^{l^{\prime}(x)}I_{S}(1) & \text{if }\,x\in\frakp\setminus\frakp^{2m+2}
\end{cases},
\]
where $l^{\prime}(x)<-1$. Hence, we get 
\[
\tau(f_{m+1})(\alpha_{0}^{-m-2})=f_{-1}(\alpha_{0}^{-m-2}\alpha_{0}^{m+1}w_{0})=w_{0}\cdot f_{-1}(\alpha_{0})=w_{0}\cdot v_{\sigma_{\vec{r}}},
\]
that is to say, $c_{m+2}=1$ as required. This completes the proof. 
\end{proof}
Finally we are ready to show that $\tau$ satisfies the condition
(C4). Given the results we have established, the proof is a formal
argument essentially same as \cite[Lemma 4.4]{Xu_freeness}.
\begin{lem}
\label{lem: condition 3-top component always increases} --- For
$n\geq0$, let $f\in B_{n+1,\sigma_{\vec{r}}}$. If $\tau(f)\in B_{n+1,\sigma_{\vec{r}}}$
then $f\in B_{n,\sigma_{\vec{r}}}$.
\end{lem}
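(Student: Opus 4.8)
The plan is to reduce condition \textup{(C4)} to a single injectivity statement and then derive it formally. Concretely, I claim it is enough to prove that for every $k\geq 1$ the \emph{outward map} $\tau_k\colon C_{k,\sig}\to C_{k+1,\sig}$, obtained by composing $\tau|_{C_{k,\sig}}$ (which lands in $C_{k-1,\sig}\oplus C_{k,\sig}\oplus C_{k+1,\sig}$ by Lemma \ref{lem: action of tau on 2n circles}) with the canonical projection onto the summand $C_{k+1,\sig}$, is injective. Granting this, write a given $f\in B_{n+1,\sig}$ as $f=g+h$ with $g\in B_{n,\sig}$ and $h:=\pi_{n+1}(f)\in C_{n+1,\sig}$, where $\pi_{n+1}$ is the projection associated with the direct-sum decomposition. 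Because $\tau(B_{n,\sig})\subseteq B_{n+1,\sig}$ and $\tau(C_{n+1,\sig})\subseteq C_{n,\sig}\oplus C_{n+1,\sig}\oplus C_{n+2,\sig}$ (Lemma \ref{lem: action of tau on 2n circles}), the $C_{n+2,\sig}$-component of $\tau(f)$ equals $\tau_{n+1}(h)$. If $\tau(f)\in B_{n+1,\sig}$ then this component is zero, so $h\in\ker\tau_{n+1}=0$ and $f=g\in B_{n,\sig}$, as required.

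To prove that $\tau_k$ is injective I would pass to pro-$p$-Iwahori invariants, where Lemma \ref{lem:tau(f_n)} has already done the essential work. The operator $\tau$ is $G_S$-equivariant, hence $I_S(1)$-equivariant, and the decomposition $\ind_{K_0}^{G_S}(\sig)=\bigoplus_m C_{m,\sig}$ is by $I_S(1)$-stable subspaces (each $C_{m,\sig}$ is a sum of the $I_S(1)$-stable spheres $R_\bullet^{\pm}(\sig)$; see Remark \ref{rem : I_S(1) invariants of the sphere R_n(sigma)}). Consequently $\pi_{k+1}$ is $I_S(1)$-equivariant and $W_k:=\ker\tau_k$ is an $I_S(1)$-subrepresentation of the finite-dimensional space $C_{k,\sig}$. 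Since $C_{k,\sig}$ is finite dimensional and smooth, the $I_S(1)$-action on it factors through a finite $p$-group, so $W_k\neq 0$ would force $W_k^{I_S(1)}\neq 0$. But $C_{k,\sig}^{I_S(1)}=R_k^+(\sig)^{I_S(1)}\oplus R_{k-1}^-(\sig)^{I_S(1)}=\ffpbar f_{-k}\oplus\ffpbar f_{k}$ by Remark \ref{rem : I_S(1) invariants of the sphere R_n(sigma)}, and Lemma \ref{lem:tau(f_n)}(ii) gives $\tau(f_{-k})=c_{-k}f_{-k}+f_{-(k+1)}$ and $\tau(f_k)=c_k f_k+f_{k+1}$ for scalars $c_{\pm k}$, whence
\[
\tau_k\bigl(a f_{-k}+b f_{k}\bigr)=a\,f_{-(k+1)}+b\,f_{k+1}\qquad(a,b\in\ffpbar).
\]
Since $f_{-(k+1)}\in R_{k+1}^+(\sig)$ and $f_{k+1}\in R_k^-(\sig)$ lie in complementary nonzero summands of $C_{k+1,\sig}$, the right-hand side vanishes only when $a=b=0$. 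Hence $W_k^{I_S(1)}=0$, so $W_k=0$.

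Thus the lemma reduces to the support analysis of Lemma \ref{lem: action of tau on 2n circles} and the explicit computation of Lemma \ref{lem:tau(f_n)} --- in particular the fact that the outward coefficient there is $1\neq 0$ --- together with the standard fact that a nonzero finite-dimensional $\ffpbar$-representation of the pro-$p$ group $I_S(1)$ has nonzero invariants; the rest is soft packaging, essentially as in \cite[Lemma 4.4]{Xu_freeness}. I do not expect a genuine obstacle. The steps deserving a moment's care are: verifying in the first paragraph that no layer $C_{m,\sig}$ with $m\le n$ contributes to the $C_{n+2,\sig}$-component of $\tau(f)$ (which is exactly what the containments in Lemma \ref{lem: action of tau on 2n circles} forbid), and checking that $W_k$ is genuinely $I_S(1)$-stable, which hinges on the grading being by $I_S(1)$-submodules rather than merely vector subspaces.
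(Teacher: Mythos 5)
Your proof is correct and is essentially the paper's own argument in different packaging: your kernel $W_{n+1}=\ker\tau_{n+1}$ is exactly the paper's $C_{n+1,\sigma_{\vec{r}}}\cap M_{n+1,\sigma_{\vec{r}}}$, and both proofs kill it by taking $I_{S}(1)$-invariants (pro-$p$ fixed-point argument) and invoking Lemma \ref{lem:tau(f_n)}(ii), with the outward coefficient $1$ and the linear independence of $f_{-(n+2)}$ and $f_{n+2}$ in the two summands of $C_{n+2,\sig}$. Your reduction via $\tau(B_{n,\sig})\subseteq B_{n+1,\sig}$ matches the paper's splitting $f=f^{\prime}+f^{\prime\prime}$, so there is nothing to add.
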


\begin{proof}
Let $M_{n+1,\sigma_{\vec{r}}}$ be the subspace of $B_{n+1,\sigma_{\vec{r}}}$
consisting of functions $f$ such that $\tau(f)\in B_{n+1,\sigma_{\vec{r}}}$.
So we have to show that $M_{n+1,\sigma_{\vec{r}}}\subset B_{n,\sigma_{\vec{r}}}$.

Suppose for contradiction that there exists some $f\in M_{n+1,\sigma_{\vec{r}}}\setminus B_{n,\sigma_{\vec{r}}}$.
Then $f\in B_{n+1,\sigma_{\vec{r}}}=B_{n,\sigma_{\vec{r}}}\oplus C_{n+1,\sigma_{\vec{r}}}$
and so we write $f=f^{\prime}+f^{\prime\prime}$ for some $f^{\prime}\in B_{n,\sigma_{\vec{r}}},\,f^{\prime\prime}\in C_{n+1,\sigma_{\vec{r}}}$.
Now note that $B_{n,\sigma_{\vec{r}}}\subset M_{n+1,\sigma_{\vec{r}}}$
since $B_{n,\sigma_{\vec{r}}}\subset B_{n+1,\sigma_{\vec{r}}}$ and
$\tau(B_{n,\sigma_{\vec{r}}})\subset B_{n+1,\sigma_{\vec{r}}}$ by
Lemma \ref{lem: action of tau on 2n circles}. Hence $f^{\prime\prime}=f-f^{\prime}\in M_{n+1,\sigma_{\vec{r}}}$.
But since $f\notin B_{n,\sigma_{\vec{r}}}$ we have $f^{\prime\prime}\neq0$.
Thus $C_{n+1,\sigma_{\vec{r}}}\cap M_{n+1,\sigma_{\vec{r}}}\neq0$. 

By Remark \ref{rem : I_S(1) invariants of the sphere R_n(sigma)}
we know that $C_{n+1,\sig}$ is $I_{S}(1)$-stable for $n\geq0$ and
so $B_{n+1,\sig}$ is $I_{S}(1)$-stable for $n\geq0$. Also since
$\tau$ is $G_{S}$-invariant $M_{n+1,\sig}$ is $I_{S}(1)$-stable.
But since $I_{S}(1)$ is a pro-$p$ group there exists some non-zero
vector $f^{*}\in C_{n+1,\sigma_{\vec{r}}}\cap M_{n+1,\sigma_{\vec{r}}}$
which is fixed by $I_{S}(1)$. Now $C_{n+1,\sigma_{\vec{r}}}:=R_{n+1}^{+}(\sigma_{\vec{r}})\oplus R_{n}^{-}(\sigma_{\vec{r}})$
and hence $C_{n+1,\sigma_{\vec{r}}}^{I_{S}(1)}=R_{n+1}^{+}(\sigma_{\vec{r}})^{I_{S}(1)}\oplus R_{n}^{-}(\sigma_{\vec{r}})^{I_{S}(1)}=\ffpbar\cdot f_{-(n+1)}\oplus\ffpbar\cdot f_{n+1}$
by Remark \ref{rem : I_S(1) invariants of the sphere R_n(sigma)}.
We write $f^{*}=\lambda f_{-(n+1)}+\mu f_{n+1}$ and using Lemma \ref{lem:tau(f_n)}(ii)
we obtain that $\tau(f^{*})$ is of the form 
\[
c_{-(n+1)}f_{-(n+1)}+d_{n+1}f_{n+1}+f_{-(n+2)}+f_{n+2}\,,\qquad\text{ where }c_{-(n+1)},d_{n+1}\text{ are scalars}.
\]
 Hence $\tau(f^{*})$ does not lie in $B_{n+1,\sig}$ because $f_{-(n+2)},f_{n+2}\in C_{n+2,\sig}$.
Thus we have a contradiction.
\end{proof}
We now complete the proof of our main theorem.
\begin{proof}[Proof of Theorem \ref{thm:freeness theorem}]
 To apply Lemma \ref{lem: technical lemma} we take $V:={\rm ind}_{K_{0}}^{G_{S}}(\sig)$
and $C_{n}:=C_{n,\sig}$ for each $n\geq0$. The operator $T:=\tau$.
By Remark \ref{rem: condition C1} and Lemmas \ref{lem: action of tau on 2n circles},
\ref{lem:tau injective on C_0}, \ref{lem: condition 3-top component always increases}
we have ensured that the conditions (C1)-(C4) are satisfied. Thus,
as in Remark \ref{rem: d=00003D1 remark} we can choose non-empty
subsets $A_{k}\subset C_{k}$ for each $k\geq0$ such that $\bigcup_{n\geq0}\bigsqcup_{i+j\leq n}\tau^{i}(A_{j})$
is a basis of ${\rm ind}_{K_{0}}^{G_{S}}(\sig)$. Consequently, the
set $\bigcup_{n\geq0}A_{n}$ forms a basis for the module action of
$\mathcal{H}(G_{S},K_{0},\sig)=\ffpbar[\tau]$ on ${\rm ind}_{K_{0}}^{G_{S}}(\sig)$.
\end{proof}
\begin{rem}
-- The freeness result provides a transparent and conceptual understanding
of the compactly induced representations of $\SL_{2}$ in the mod-$p$
setting. Beyond its intrinsic interest, the heart of the matter involves
the verification of the hypotheses laid out in the formal framework
of the Lemma \ref{lem: technical lemma}, and this idea may be adapted
to split reductive groups of higher ranks when the corresponding Hecke
action exhibits a graded or triangular behavior. Note that it is essential
to consider split groups as in this case the spherical Hecke algebra
is known to be commutative (see \cite[Corollary 1.3]{Herzig_satake}).
Such constructions may play a role in the study of supersingular representations.
\end{rem}


\begin{thebibliography}{10}
\bibitem{Abdellatif-1} \text{R. Abdellatif}, \textit{Classification
des représentations modulo $p$ de ${\rm SL}(2,F)$}, Bull. Soc. Math.
France \textbf{142} (2014), no.~3, 537--589

\bibitem{Abe_split} N. Abe, \emph{On a classification of irreducible
admissible modulo $p$ representations of a $p$-adic split reductive
group}, Compositio Math. \textbf{149 }(2013), 2139-2168

\bibitem{Abe-Henniart-Herzig-Vigneras} N. Abe, G. Henniart, F. Herzig,
M.-F. Vign\'eras, \emph{A classification of irreducible admissible
mod $p$ representations of $p$-adic reductive groups, }J. Amer.
Math. Soc. \textbf{30} (2017), no. 2, 495-559

\bibitem{abe2017questionsmodprepresentations} N. Abe, G. Henniart,
F. Herzig, M.-F. Vign\'eras, \textit{Questions on mod p representations
of reductive p-adic groups}. 2017. arXiv.1703.02063

\bibitem{Barthel-1} \text{L. Barthel and R.~A. Livné}, \textit{Irreducible
modular representations of ${\rm GL}_{2}$ of a local field}, Duke
Math. J. \textbf{75} (1994), no.~2, 261--292

\bibitem{BreuilGL2Qp1} C. Breuil, \textit{Sur quelques représentations
modulaires et $p$-adiques de ${\rm GL}_{2}(\bold Q_{p})$. }I, Compositio
Math. \textbf{138} (2003), no.~2, 165--188

\bibitem{ChengSL2} \text{C. Cheng}, \textit{Mod $p$ representations
of ${\rm SL_{2}}(\Bbb{Q}_{p})$}, J. Number Theory \textbf{133} (2013),
no.~4, 1312--1330

\bibitem{Das24} A. Das, \emph{A restriction problem for mod-$p$
representations of $\mathrm{SL}_{2}(F)$. }2024.\emph{ }arXiv.2412.11751

\bibitem{Herbert et al} A. H\'erbert, D. Izquierdo, B. Loisel, $\Lambda$-\emph{buildings
associated to quasi-split groups over $\Lambda$-valued fields}, M\"unster
J. of Math. \textbf{16} (2023), 323--487

\bibitem{Herzig_GLn} F. Herzig, \emph{The classification of irreducible
admissible mod $p$ representations of a $p$-adic ${\rm GL}_{n}$,
}Invent. math. \textbf{186} (2011), 373--434

\bibitem{Herzig_satake} F. Herzig, \emph{A Satake isomorphism in
characteristic} $p$, Compositio Math. \textbf{147 }(2011), no.~1,
263-283

\bibitem{Hu12} \text{Y. Hu}, \textit{Diagrammes canoniques et
représentations modulo $p$ de ${\rm GL}_{2}(F)$}, J. Inst. Math.
Jussieu \textbf{11} (2012), no.~1, 67--118

\bibitem{Rab} J. Rabinoff, \emph{The Bruhat-Tits building of a $p$-adic
Chevalley group and an application to representation theory}, Senior
Thesis; https://sites.math.duke.edu/\textasciitilde jdr/papers/building.pdf

\bibitem{Xu_freeness} P. Xu, \emph{Freeness of spherical Hecke modules
of unramified $U(2,1)$ in characteristic $p$, }J. Number Theory
\textbf{195 }(2019), 293-311

\bibitem{Xu_hecke_eigenvalues} \text{P. Xu}, \textit{Hecke eigenvalues
in $p$-modular representations of unramified $U(2,1))$}, Proc. Amer.
Math. Soc. \textbf{153 }(2025), no. 1, 437-450

\end{thebibliography}
\end{document}